\newcommand\xrowht[2][0]{\addstackgap[.5\dimexpr#2\relax]{\vphantom{#1}}}
\DeclareMathOperator\Log{Log}
\numberwithin{equation}{section}
\theoremstyle{plain}
\newtheorem{thm}{Theorem}[section]
\newtheorem{cor}[thm]{Corollary}
\newtheorem{prop}[thm]{Proposition}
\newtheorem{lem}[thm]{Lemma}
\newtheorem{rem}[thm]{Remark}
\newtheorem{definition}[thm]{Definition}
\def\moverlay{\mathpalette\mov@rlay}
\def\mov@rlay#1#2{\leavevmode\vtop{%
   \baselineskip\z@skip \lineskiplimit-\maxdimen
   \ialign{\hfil$\m@th#1##$\hfil\cr#2\crcr}}}
\newcommand{\charfusion}[3][\mathord]{
    #1{\ifx#1\mathop\vphantom{#2}\fi
        \mathpalette\mov@rlay{#2\cr#3}
      }
    \ifx#1\mathop\expandafter\displaylimits\fi}
\newcommand{\bigcupdot}{\charfusion[\mathop]{\bigcup}{\cdot}}
\title{Combinatorial growth in the modular group}
\author{Ara Basmajian}
\address[Ara Basmajian]{The Graduate Center, CUNY \\ 365 Fifth Ave., N.Y., N.Y., 10016 and} 
 \address{Hunter College, CUNY \\ 695 Park Ave., N.Y., N.Y., 10065, USA}
\email{abasmajian@gc.cuny.edu}
 \thanks{The first author was supported by a PSC-CUNY Grant and a grant from the Simons foundation (359956, A.B.) The second author was supported in part by the Faculty Development Plan through the School of Science at Manhattan College.}
\author{Robert Suzzi Valli}
\address[Robert Suzzi Valli]{Manhattan College\\
             Riverdale, N.Y. 10471, USA}
\email{robert.suzzivalli@manhattan.edu}
\thanks{}
\keywords{asymptotic growth, binary words, closed geodesics, modular orbifold}
\subjclass[2020]{Primary 20F69, 32G15, 57K20; Secondary 20H10, 53C22}
\begin{document}
\begin{abstract} We consider an exhaustion of the modular orbifold by compact subsurfaces and  show that   the growth rate, in terms of word length,  of  the  reciprocal   geodesics  on such   subsurfaces (so named  low lying reciprocal geodesics) converge    to the growth rate of the full set of reciprocal geodesics on the modular orbifold. We derive  a similar result 
for the low lying geodesics and their growth rate convergence to the growth rate of the full set of closed geodesics.

\end{abstract}
\maketitle


\section{Introduction}
Consider the modular surface;  that is, the $(2,3,\infty)$ triangle 
orbifold,  $S=\Bbb{H}/ PSL(2,\mathbb{Z})$.
A {\it reciprocal geodesic} on the modular surface is a closed geodesic that begins and ends at the order two cone point, traversing it in both directions.   Its lift is the  conjugacy class of a hyperbolic element with axis passing through an order two fixed point. Specifying the geometric length of a geodesic is equivalent to specifying the   absolute trace of such a hyperbolic element by way of the formula, 
$T_{\gamma}=2 \cosh \frac{\ell(\gamma)}{2}$.
Sarnak \cite{Sar} showed 
\begin{equation*}
|\{\gamma \text{ a primitive reciprocal geodesic with }   T_{\gamma} \leq T  \}| \sim   \frac{3}{8} T.
\end{equation*}
Let  $\mathcal{C} \subset S$ be the cusp with its natural horocycle boundary  of length one. The {\it depth}  of a point in    
$\mathcal{C}$   is its distance to the natural horocycle of length one. 
For $m$ a positive integer, we define the {\it $m$-thick  part}    of $S$,
denoted $S_m$,  to be $S$ with the points a depth larger than $\log \frac{m+1}{2}$ deleted. Thus the $m$-thick parts form a compact exhaustion of $S$.   We are interested in the reciprocal geodesics that lie in the $m$-thick part (so called $m$-low lying reciprocal geodesics).  See Figure \ref{fig:recgeod}.
Bourgain and Kontorovich \cite{B-K3}  showed (in our terminology) that for any $\epsilon >0$, there is an  $m >0$ so that
the number of  fundamental   reciprocal geodesics in the $m$-thick part having   absolute trace  $ \leq T$  has growth rate at least $T^{1- \epsilon}$.  In particular, as $\epsilon$ goes to zero, $m$ goes to infinity and we have a nested, increasing set of compact subsets that converge (in an appropriate sense) to the modular orbifold $S$.  Combined with the Sarnak result this shows that the growth rates of the low lying reciprocal geodesics converge (for each $T$, as $m \rightarrow \infty$) to the growth rate of the reciprocal geodesics. 
 Using the fact that $\mathbb{Z}_2\ast \mathbb{Z}_3$ is isomorphic to the modular group  we give a combinatorial analogue  of the above results using   word length, instead of absolute trace, with respect to the generators of the factors in $\mathbb{Z}_2\ast \mathbb{Z}_3$. For $\gamma$
a closed geodesic on $S$ we define its word length, denoted 
 $|\gamma |$, to be  the word length of one of its   lifts 
 $g \in PSL(2,\mathbb{Z})$, which is necessarily even.  Our  main results are
 
\begin{figure}
\begin{center}
\begin{overpic}[scale=.7]{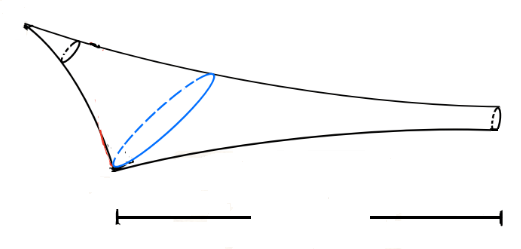}
\put(2,43){\footnotesize{$3$}}
\put(20.5,11.5){\footnotesize{$2$}}
\put(35,22){$\gamma$}
\put(50,5){$\Log \dfrac{m+1}{2}$}
\end{overpic}
\end{center}
\caption{A reciprocal geodesic $\gamma$ on $S_m$}
\label{fig:recgeod}
\end{figure}

\begin{thm} \label{thm:main}   

\leavevmode
\begin{enumerate}
\item $ |\{\gamma \text{ a primitive reciprocal geodesic with }
 |\gamma| \leq 2t\}| \sim  2^{\lfloor\frac{t}{2}\rfloor}$\\
\item
\resizebox{.9\hsize}{!}
{$|\{\gamma \text{ a primitive   reciprocal geodesic in $S_{m\geq2}$\,with }
 |\gamma| \leq 2t\}| \sim \left(\frac{\alpha_m}{2+(m+1)(\alpha_m-2)}\right)\alpha_m^{\lfloor\frac{t}{2}\rfloor}$}\\
\item
 $|\{\gamma \text{ a primitive closed geodesic with }
 |\gamma| \leq 2t\}| \sim  \frac{2^{t+1}}{t}$\\
\item
 $|\{\gamma \text{ a primitive closed geodesic in $S_{m\geq3}$ with }
 |\gamma| \leq 2t\}| \gtrsim
  \frac{2^{t(1-1/m)}}{t}$
\end{enumerate}

\end{thm}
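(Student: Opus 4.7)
\textbf{Proof proposal for Theorem \ref{thm:main}(4).} The plan is to follow the strategy employed for parts (2) and (3): symbolically code closed geodesics using the free-product presentation $PSL(2,\mathbb{Z}) \cong \mathbb{Z}_2 \ast \mathbb{Z}_3 = \langle a,b \mid a^2, b^3\rangle$, translate the $m$-thick condition into a combinatorial bound on monochromatic runs, count the admissible cyclic words by a linear recurrence, and bound the resulting growth rate $\alpha_m$ from below by $2^{1-1/m}$.

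Every primitive conjugacy class of a hyperbolic element of $\mathbb{Z}_2\ast\mathbb{Z}_3$ has a cyclically reduced representative $ab^{\epsilon_1}ab^{\epsilon_2}\cdots ab^{\epsilon_k}$ with $\epsilon_j\in\{+1,-1\}$ and word length $2k=|\gamma|$. Forgetting the interleaved $a$'s gives a bijection between primitive unoriented closed geodesics with $|\gamma|=2k$ and primitive binary necklaces of length $k$, taken modulo cyclic rotation and the reversal-inversion involution $(\epsilon_j)\mapsto(-\epsilon_{k+1-j})$. As in the derivation for part (2), a cyclic monochromatic run of $r$ consecutive $+$'s (respectively $-$'s) in the coding corresponds to a $T^r$ (resp.\ $T^{-r}$) subword and produces a cusp excursion to depth $\log\tfrac{r+1}{2}$; hence $\gamma\in S_m$ iff every cyclic monochromatic run in the coding has length at most $m$.

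Let $L_m(k)$ be the number of length-$k$ linear binary strings whose monochromatic runs all have length at most $m$. Conditioning on the length of the final run shows that the count of such strings ending in a fixed symbol satisfies $\ell(k)=\ell(k-1)+\ell(k-2)+\cdots+\ell(k-m)$, with characteristic polynomial $x^{m+1}-2x^m+1=(x-1)(x^m-x^{m-1}-\cdots-1)$. Its largest real root $\alpha_m\in(1,2)$ satisfies $\alpha_m+\alpha_m^{-m}=2$, so $L_m(k)\sim c_m\alpha_m^k$. Passing from linear to cyclic strings (by a transfer-matrix trace), then from cyclic to primitive (by M\"obius inversion, noting that imprimitive contributions are $O(\alpha_m^{k/2})$), and finally quotienting by the reversal-inversion involution, cost only bounded multiplicative constants and a factor $1/k$. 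Summing over $k\leq t$ then gives at least $\gtrsim\alpha_m^t/t$ admissible primitive closed geodesics with $|\gamma|\leq 2t$.

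It remains to show $\alpha_m\geq 2^{1-1/m}$ for $m\geq 3$. Setting $q(x)=x+x^{-m}-2$, we have $q(1)=q(\alpha_m)=0$ and $q<0$ on the interval $(1,\alpha_m)$, so it suffices to verify $q(2^{1-1/m})\leq 0$, which simplifies to $2^{-1/m}+2^{-m}\leq 1$. This inequality holds for every $m\geq 1$ (with equality only at $m=1$): writing $c=\ln 2$ and $u=c/m$, it reads $h(u):=e^{-u}+e^{-c^2/u}\leq 1$, and one checks directly that $h$ is continuous on $(0,\infty)$, satisfies $\lim_{u\to 0^+}h=\lim_{u\to\infty}h=1$, and attains its unique interior local maximum value $2e^{-c}=1$ at $u=c$ (with the other critical points being local minima). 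Substituting $\alpha_m\geq 2^{1-1/m}$ into the preceding count gives the desired lower bound. The principal non-routine step is the coding-to-geometry dictionary in the second paragraph, which is shared with part (2) and is assumed established there; granted it, the combinatorial counting and the root estimate are routine.
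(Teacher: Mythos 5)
Your proposal is correct, and for the key combinatorial step it takes a genuinely different route from the paper. The paper never estimates the growth rate of run-bounded binary necklaces via the recurrence for item (4); instead it constructs an explicit subset $\mathcal{B}_{2t,m}$ of $m$-low lying words by partitioning the $t$ slots into blocks of size $m$, forcing the value of one slot per block (to break long runs) and leaving the remaining $t-\lfloor t/m\rfloor-1$ slots free, which yields the crude lower bound $|L_{2t,m}|\geq 2^{t-t/m-1}/t$ directly; nonprimitives are then removed via the $\tfrac12 t2^{t/2}$ bound and the sum over lengths is handled by Stolz--Ces\`aro, exactly as you do. You instead count run-bounded strings exactly by the recurrence $\ell(k)=\ell(k-1)+\cdots+\ell(k-m)$, recognize the growth rate as the same root $\alpha_m$ of $z^m-z^{m-1}-\cdots-1$ that the paper uses for item (2), and then prove the algebraic estimate $\alpha_m\geq 2^{1-1/m}$, which reduces to $2^{-1/m}+2^{-m}\leq 1$ (amusingly, this is precisely the inequality asserting that the bound $\alpha_m\geq 2(1-2^{-m})$ the paper quotes from Dresden--Du is at least as strong as yours, so you could also just cite that). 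Your route buys a sharper intermediate statement, namely $|L_{2t,m}|\sim c_m\alpha_m^t$ rather than a one-sided bound, and unifies items (2) and (4) around the same constant $\alpha_m$; the paper's construction buys explicit, $m$-independent constants and an $m$-independent threshold $t_0$, which the paper flags (Remark 6.4) as essential for deducing Corollary 1.2 --- if you wanted the corollary as well, you would need to check that your $c_m$ and your implied threshold are uniform in $m$ (they are, since $\alpha_m\to 2$ and the leading coefficient tends to a positive limit, but it requires a remark). Two small points of precision: the depth of an excursion winding $r$ times is strictly between $\log\tfrac{r}{2}$ and $\log\tfrac{r+1}{2}$, not equal to $\log\tfrac{r+1}{2}$ (this does not affect your conclusion); and the paper counts conjugacy classes, i.e.\ oriented geodesics, so the quotient by your reversal-inversion involution is not needed, though as you note it only costs a factor of $2$ and is harmless for a $\gtrsim$ statement.
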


The constant  $\alpha_m$ in item (2) of Theorem \ref{thm:main} is the unique positive root of the polynomial,
$$ z^{m}-z^{m-1}-z^{m-2}-...-z-1.$$ 
The  $\{\alpha_m\}_{m=2}^{\infty}$ are increasing in $m$,
satisfy  $2(1-\frac{1}{2^{m}})\leq \alpha_m  \leq 2$, and go to 2 as $m \rightarrow \infty$. See \cite{Dr} for the details and the proofs of these algebraic properties. Using these properties on the functions in Theorem \ref{thm:main} we have,

\begin{cor}
\label{cor:main} The asymptotic growth rate of the primitive reciprocal geodesics in  the $m$-thick part, $S_m$, converges to the asymptotic growth rate of the primitive reciprocal geodesics on the modular orbifold, as $m \rightarrow \infty$. Similarly,
the asymptotic growth rate of the primitive closed geodesics in $S_m$ converges, up to a multiplicative constant,  to the asymptotic growth rate of the primitive closed geodesics on the modular orbifold, as $m \rightarrow \infty.$
\end{cor}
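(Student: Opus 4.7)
The plan is to extract the pointwise-in-$t$ asymptotics from Theorem~\ref{thm:main} and take limits as $m \to \infty$; all the substantive work is done by Theorem~\ref{thm:main} itself together with the algebraic properties of $\{\alpha_m\}$ quoted from \cite{Dr}.

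For the first assertion I would compare items (1) and (2) of Theorem~\ref{thm:main}. Writing $c_m := \alpha_m/\bigl(2 + (m+1)(\alpha_m - 2)\bigr)$ for the leading constant in item (2), it suffices to verify that, for each fixed $t$, $c_m\,\alpha_m^{\lfloor t/2 \rfloor} \to 2^{\lfloor t/2 \rfloor}$ as $m \to \infty$. Since $\alpha_m \to 2$, the factor $\alpha_m^{\lfloor t/2 \rfloor}$ converges to $2^{\lfloor t/2 \rfloor}$ for every fixed $t$, so everything reduces to $c_m \to 1$. The bound $2(1 - 2^{-m}) \leq \alpha_m \leq 2$ immediately gives $(m+1)(2 - \alpha_m) \leq (m+1)/2^{m-1} \to 0$, whence the denominator of $c_m$ tends to $2$ and $c_m \to 1$, as required.

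For the second assertion I would sandwich the count of primitive closed geodesics in $S_m$ between the automatic upper bound from item (3) (since every closed geodesic in $S_m$ is a closed geodesic on $S$) and the lower bound from item (4). For each fixed $t$,
$$\frac{2^{t(1-1/m)}}{t} \;=\; \frac{2^{t+1}}{t}\cdot 2^{-(1+t/m)} \;\longrightarrow\; \frac{2^t}{t} \;=\; \tfrac{1}{2}\cdot\frac{2^{t+1}}{t}$$
as $m \to \infty$, so in the limit the count on $S_m$ lies within a multiplicative factor between $\tfrac{1}{2}$ and $1$ of the count on $S$, which is exactly the ``up to a multiplicative constant'' convergence claimed.

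The only real subtlety, and the one place where the argument is not a purely formal limit, is controlling the constant $c_m$: without the sharp bound $\alpha_m \geq 2(1-2^{-m})$, the term $(m+1)(\alpha_m - 2)$ in the denominator of $c_m$ could a priori contribute nontrivially in the limit and spoil the convergence to $1$. With that bound from \cite{Dr} in hand, both parts of the corollary follow at once from Theorem~\ref{thm:main}.
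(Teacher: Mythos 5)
Your argument is correct and is essentially the paper's own: the paper offers no separate proof of Corollary \ref{cor:main}, simply invoking the bounds $2(1-2^{-m}) \leq \alpha_m \leq 2$ and $\alpha_m \to 2$ from \cite{Dr} on the functions appearing in Theorem \ref{thm:main}, which is exactly the computation you carry out (including the key observation that $(m+1)(2-\alpha_m)\to 0$, so $c_m \to 1$). The one point worth making explicit in the second part is that the constant and threshold $t_0$ hidden in the $\gtrsim$ of item (4) must not degenerate as $m \to \infty$; the paper secures this by remarking that the $t_0$ of Theorem \ref{thm:lowlyingbd} is independent of $m$ and by the explicit Stolz--Cesaro constant $\tfrac{1}{2(2-2^{1/m})} \to \tfrac12$.
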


\begin{rem}  We derive the exact size  of the full set of the  low lying reciprocal geodesics, reciprocal geodesics, and closed geodesics of word length exactly $2t$ (see Table \ref{table: main results}), allowing us to achieve coarse bounds and hence asymptotic growth rates for word length $\leq2t$; thus proving the results of Theorem \ref{thm:main}. 
\end{rem}

\begin{table}
\begin{tabular}{|l|l|l|}

 \hline\xrowht{20pt}
{\bf Geodesic Set} & {\bf Bijection To} & {\bf Cardinality}\\

      \hline\xrowht{25pt}
   $\text{Geodesics of length } 2t $& \vtop{\hbox{\strut Lyndon binary words}\hbox{\strut (primitive and nonprimitive)}\hbox{\strut of length $t$}} & $\frac{1}{t} \displaystyle\sum_{j=1}^{t}  2^{\text{gcd}(j,t)}-2$  \\
   \hline\xrowht{25pt}
   \text{Reciprocal geodesics  of length $4t$} & Compositions of $t$ & $2^{t-1}$  \\
   \hline\xrowht{25pt}
   $\text{Geodesics in $S_m$ of length $2t$}$& \vtop{\hbox{\strut $m$-Lyndon binary words}\hbox{\strut of length $t$ }} & $\geq \frac{2^{t-\frac{t}{m}-1}}{t}$  \\
\hline\xrowht{25pt}
\vtop{\hbox{\strut Reciprocal geodesics in $S_m$}\hbox{\strut of length $4t$}} & \vtop{\hbox{\strut Compositions of $t$ with}\hbox{\strut parts bounded by $m$}} & $\left\lfloor \frac{1}{2}+\frac{\alpha_m -1}{2+(m+1)(\alpha_m -2)}\alpha_m^{t}  \right\rfloor$    \\
\hline

\end{tabular}
\vskip15pt
  \centering 
  \caption{Cardinality of geodesic classes}\label{table: main results}
\end{table}


 The study of asymptotic growth rates of geometric lengths of various classes of closed geodesics has a long and storied history beginning with Huber's result for all closed geodesics, to Mirzakhani's growth rate of the simple closed geodesics,  to more general results for non-simple closed geodesics and reciprocal geodesics \cite{Boca,B-K2,B-K3,Bus,ErPaSo,ErSo,Mirz,Sar}.  Concurrently  there is the study of such geodesics in terms of word length or equivalently  primitive conjugacy classes  and their word length  growth rates leading to  more abstract, algebraic investigations of groups such as surface groups or  free groups \cite{CalLou,Er,GubSap,Park,Ri,Tra}. 
 
 Our focus in this paper is on so called low lying and reciprocal words in the modular group. These are the lifts of the low lying and reciprocal geodesics, resp. Although neither of these sets form a group they have the minimal properties needed to consider the growth rate of their  primitive conjugacy classes. Namely, these subsets of the modular group  are comprised of infinite order elements,  are conjugacy invariant, are closed under taking powers,  and the unique positive power primitive element in the modular group  is also a member of the subset.

  We take,  for the most part,  a combinatorial approach to determine the growth rate of the primitive conjugacy classes.
 Typically  in such arguments  a convenient normal form for the conjugacy classes  is used and then counted.   Of course, one needs to determine when two elements in normal form  represent the same conjugacy class. In the case of reciprocal words we prove a crucial lemma (Lemma \ref{lem: normal form}) showing exactly two elements in normal form are conjugate and identifying these two elements. The fact that there are two primitive conjugates  was first  proven in Sarnak \cite{Sar} using  different  methods.

 This normal form using the isomorphism  from  
 $\mathbb{Z}_2\ast \mathbb{Z}_3$ to $PSL(2,\mathbb{Z})$ allows us to represent a closed geodesic as a product of    parabolic elements.  How deep a geodesic wanders into  the cusp is directly related to the exponents  of these parabolics. See Lemma \ref{lem:wandering deep in cusp} for  a precise statement. 
  
The paper is organized so that in section \ref{sec:basics and notation} we derive some of the elementary but key lemmas  as well as set up notation. In section \ref{countingbinarywords} we talk about the normal form of a reciprocal word and prove a crucial lemma
which identifies when two such words in normal form are conjugate. 
In sections  \ref{sec: counting primitive conjugacy}  and
\ref{sec: counting reciprocal words},
we determine the size of the  conjugacy classes of elements in  $\mathbb{Z}_2\ast \mathbb{Z}_3$  of length $2t$ and  reciprocal words of length  $4t$ as well the primitive classes of these sets. In section \ref{sec:lying low} we identify   
the low lying conjugacy  classes of length $2t$ with so called 
$m$-Lyndon words of length $t$, and derive an effective lower bound for the growth of such words.  We next construct  a bijection between the conjugacy classes of $m$-low lying reciprocal  words and compositions with parts bounded by $m$
allowing us to count these classes.  Section \ref{sec:representations and geodesic excursion} relates the cusp geometry of the modular orbifold with geodesic excursions  into the cusp. Finally in section \ref{sec:all together} we put the work of the previous sections together to prove our main theorem.


\section{Basics and notation} \label{sec:basics and notation}

We use the notation $f \sim g$ to mean asymptotic to and the symbol $f \gtrsim g$ to mean that there exists a constant $C$ and a $t_0$ so that, $f(t) \geq Cg(t)$, for all $t \geq t_0$.

Consider the group $G=\mathbb{Z}_2\ast \mathbb{Z}_3$. Assume the generator of $\mathbb{Z}_{2}$   is  $a$ and the generator of  $\mathbb{Z}_3$ is   $b$.  An element $g \in G$ is {\it primitive} if it is not a non-trivial power of another element of $G$. 
The {\it word length}  of $g$ denoted  $\|g\|$, is  the minimum length among all words representing $g$ using the symmetric set of generators  
$\left\{a,b,b^{-1}\right\}$. Set  $\mathcal{W}= \{\text{reduced words in the generators of $G$}\}$. The conjugacy class of $g \in G$ is denoted, $\left[g \right]$. 
For a positive integer  $s$, since conjugation  commutes with taking powers,  we may define $\left[g \right]^{s}:=\left[g^{s} \right]$. 
The {\it length of a conjugacy class}  $[g]$ is given by
$\|[g]\| = min \left\{\|h\| :h\in [g] \right\}$. A word in 
$\mathcal{W}$ is {\it cyclically reduced} if any cyclic permutation of it is a reduced word. Though cyclically reduced words  in a conjugacy class are not unique they do realize the minimum length in the conjugacy class.  In fact, all conjugates of a cyclically reduced word are cyclic permutations of each other.  For the basics on combinatorial group theory see \cite{LyndSch, Mag}.

We call a reduced word that begins with $a$ and ends with $b$ or $b^{-1}$ an $(ab)$-word.  Similarly, we have $(aa)$-, $(bb)$-, $(ba)$-words. We remark the obvious but important fact that an $(ab)$- or $(ba)$-word is cyclically reduced.

We have the following fundamental lemma.

 \begin{lem}  \label{lem: conjugates}

 Let $x \in \mathcal{W}$ where $x$ is not conjugate to one of the generators, that is, not conjugate to $a$, $b$, or $b^{-1}$. Then 
 \begin{enumerate}
  \item  $x$ is conjugate to an (ab)-word $y$ with 
  $\|x\| \geq \|y\|$.
  \item The only conjugates of the word 
  $ab^{\epsilon_1}\dots ab^{\epsilon_{t}}$  that are   (ab)-words are its even cyclic   permutations. That is, $ab^{\epsilon_{t}} ab^{\epsilon_1}\dots ab^{\epsilon_{t-1}}$
  and so on. 
  \item  If $y$ is an (ab)-word and $x^s=y$ for 
  $s$ a positive integer, then $x$ is an (ab)-word  and $s\|x\|=\|y\|$.  
  \item   If  $\left[x\right]^s=\left[y\right]$ then 
  $s \|[x]\|=\|[y]\|.$

\end{enumerate}
\end{lem}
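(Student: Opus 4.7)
The underlying engine for all four parts is cyclic reduction in the free product $G = \mathbb{Z}_2\ast\mathbb{Z}_3$, together with the standard fact that two cyclically reduced elements of a free product are conjugate if and only if they differ by a cyclic permutation of their letters (see \cite{LyndSch}).

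For (1), pass to a cyclically reduced conjugate $z$ of $x$; cyclic reduction only shortens, so $\|z\|\leq\|x\|$. Reduced words in $G$ alternate syllables drawn from $\{a\}$ and $\{b,b^{-1}\}$, and the only cyclically reduced elements consisting of a single syllable are $1,a,b,b^{-1}$, so the hypothesis that $x$ is not conjugate to a generator forces $z$ to be either an $(ab)$- or a $(ba)$-word. In the latter case, shifting the leading $b^{\pm1}$ to the end (which is just conjugation by that letter) produces an $(ab)$-word of the same length. For (2), $w=ab^{\epsilon_1}\cdots ab^{\epsilon_t}$ is itself cyclically reduced, so by the conjugacy theorem its cyclically reduced conjugates are exactly its $2t$ cyclic permutations. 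A shift by $k$ positions begins with $a$ iff $k$ is even, isolating the $t$ even cyclic permutations as the only $(ab)$-words among them.

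Part (3) is the subtle step. Write $x = u\tilde{x}u^{-1}$ where $\tilde{x}$ is obtained from $x$ by iteratively peeling off matched first and last letters; by construction there is no cancellation at the two junctions of this expression, nor between consecutive copies of $\tilde{x}$, so $x^s=u\,\tilde{x}^s\,u^{-1}$ is already in reduced form. If $u\neq 1$, then the first and last letters of $x^s$ would be a mutually inverse pair, the initial letter of $u$ and its formal inverse coming from $u^{-1}$; but $y$ starts with $a$ and ends with $b^{\pm 1}$, which are not mutually inverse in $G$. Hence $u=1$, $x=\tilde{x}$ is cyclically reduced, $x^s$ is literally the concatenation of $s$ copies of $x$, and matching the first and last letters to those of $y$ shows that $x$ is an $(ab)$-word with $s\|x\|=\|y\|$.

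For (4), use (1) to choose $(ab)$-word representatives $x^\ast\in[x]$ and $y^\ast\in[y]$ realizing $\|[x]\|$ and $\|[y]\|$ respectively. Then $x^{\ast s}$ is an $(ab)$-word belonging to $[x]^s=[y]$, so it is conjugate to $y^\ast$; by (2), $y^\ast$ is a cyclic permutation of $x^{\ast s}$, yielding $\|[y]\|=\|y^\ast\|=\|x^{\ast s}\|=s\|x^\ast\|=s\|[x]\|$. The main obstacle is the reduction step in (3): the asymmetry between the $\mathbb{Z}_2$-factor (governing the initial letter of $y$) and the $\mathbb{Z}_3$-factor (governing the terminal letter) is precisely what prevents a nontrivial conjugator $u$ from surviving in the reduced form of the $s$th power.
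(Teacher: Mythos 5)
Your overall route is the same as the paper's: the paper declares items (1)--(3) immediate consequences of cyclic reduction in the free product and proves (4) by conjugating $y$ into an $(ab)$-word and applying (3). Your proofs of (1) and (2) are correct, and your (4) --- which goes through (1) and (2) directly, matching $y^\ast$ to a cyclic permutation of $x^{\ast s}$, rather than through (3) --- is a perfectly good variant.

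The soft spot is item (3), where your reduction is free-\emph{group} reasoning applied to a free \emph{product}. In $G=\mathbb{Z}_2\ast\mathbb{Z}_3$ a word can fail to be cyclically reduced without its first and last syllables being mutually inverse: they need only lie in the same free factor, as in $x=bab$, where $b\cdot b=b^{-1}\neq 1$. ``Peeling off matched first and last letters'' leaves such an $x$ untouched, so your construction returns $u=1$ and $\tilde x=x$; yet consecutive copies of $\tilde x$ do interact --- $(bab)^2=bab^{-1}ab$ has length $5$, not $6$ --- so the claims that $x^s$ is literally the concatenation of $s$ copies of $\tilde x$ and that a nontrivial conjugator forces the first and last letters of $x^s$ to be a mutually inverse pair both fail ($bab^{-1}ab$ begins and ends with $b$). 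The repair is standard: use cyclic reduction in the free-product sense (first and last syllables in \emph{different factors}), where the peeling step may merge, not just cancel, end syllables. One then checks that if $x$ is not cyclically reduced in this sense, the reduced form of every power $x^s$ still has its first and last syllables in a common factor --- which is exactly what contradicts $y$ being an $(ab)$-word, since its initial syllable lies in $\mathbb{Z}_2$ and its terminal syllable in $\mathbb{Z}_3$. With that substitution your argument for (3), and hence the whole lemma, goes through; the asymmetry you point out at the end is indeed the operative fact, it just has to be phrased at the level of factors rather than of inverse letters.
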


\begin{proof}  Items (1) - (3) follow immediately.  To prove  item (4)  we may assume, by conjugating if necessary, that $y$ is an $(ab)$-word. 
Now, by assumption there exists an $x$ so that 
 $x^s=y$, and hence $x$ is an $(ab)$-word by item (3).  Moreover we have  $ \|[y]\|=\|y\|=s\|x\|=s\|[x]\|$, where the second and third equalities also follow from item (3).

\end{proof}

\begin{rem}  In the group $G=\mathbb{Z}_2\ast \mathbb{Z}_3$ the infinite order elements are  the positive  power of a unique, primitive element. Although this can proven using purely combinatorial methods, the easiest way to see this is by  the fact that $G$ has a discrete, faithful representation into $PSL(2,\mathbb{Z})$. 

 \end{rem}
 
  Throughout this work we consider  subsets of $G$  that have the  following minimal properties.
   
 \begin{definition}
 A set $\mathcal{A} \subset G$ made up of infinite order elements  is said to satisfy condition (*) if the following properties are satisfied.

\begin{itemize}

\item The positive or negative power of any element in 
$\mathcal{A}$ is also in $\mathcal{A}$.
  \item For any element in $\mathcal{A}$, the unique primitive element in $G$ for which it is a  positive power  is also in $\mathcal{A}$. 
  
   \item $\mathcal{A}$ is conjugacy invariant. 
  
\end{itemize} .

 \end{definition}

In the sequel the subsets $\mathcal{A}$ will denote either   infinite order words  in $G$,  reciprocal words (to be defined later), or low lying words (to be defined later). For now we proceed abstractly with the set $\mathcal{A}$ satisfying condition (*),  we fix notation, and derive some basic facts.

Setting  $\mathcal{A}^{p}=\{\text{primitive elements of } 
\mathcal{A}\}$  we have,
$\mathcal{A}^{p} \subseteq  \mathcal{A} \subseteq
\mathcal{W}$. Since each of these subsets is closed under conjugation by elements of $G$ we define the conjugacy classes of these subsets by capitals: ${A}^{p}$, $A$, and $W$, respectively. Note that $W$ is the full set of conjugacy classes in $G$. We denote the {\it nonprimitive conjugacy classes}  in $A$ by 
$A^{np}$.  

  For various choices of $\mathcal{A}$  we are interested in the growth rate of primitive conjugacy classes in $\mathcal{A}$.
    Here growth is measured by word length in terms of the generators.

For $t$ a positive integer we use $t$ as a subscript  to denote the elements in that set of length $t$.  Similarly, we use
$\leq t$  as a subscript to denote the elements in the set of length $\leq t$.
For example, $A_t$ denotes the conjugacy classes in $A$ of length $t$, and $A_{\leq t}$  denotes the conjugacy classes in $A$ of length less than or equal to $t$.  The growth function for the set $A$ is denoted by $|A_{\leq t}|$.  A {\it proper divisor} of $t$ is a positive integer that divides $t$ but is not $1$ or $t$. 

We next define a map from  primitive conjugacy classes to non-primitive conjugacy classes given by a power map.

\begin{lem} \label{lem: bijection}
 The map $\iota : \bigcup_{s | t}  {A}^{p}_s 
 \rightarrow {A_{t}^{np}}$ given by $[x] \mapsto [x^{t/s}]$ is well-defined and a bijection. That is, the nonprimitive conjugacy classes in 
$A_{t}$ are in one-to-one correspondence with elements of  $\bigcupdot_{s | t} {A}^{p}_s$, where  the union is over all  proper divisors, $s$,  of $t$.

\end{lem}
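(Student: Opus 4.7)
The plan is to verify that $\iota$ is well-defined, injective, and surjective by combining the length formula Lemma~\ref{lem: conjugates}(4) with the uniqueness of primitive roots in $G$ (the Remark preceding this lemma). Condition $(*)$ on $\mathcal{A}$ will ensure that all elements produced along the way remain in $\mathcal{A}$.

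For well-definedness, I would start from $[x] \in A_s^p$ with $s$ a proper divisor of $t$, observe that $x^{t/s}$ lies in $\mathcal{A}$ by closure under positive powers, and use Lemma~\ref{lem: conjugates}(4) to compute $\|[x^{t/s}]\| = (t/s)\|[x]\| = t$. Since $t/s \geq 2$, the class $[x^{t/s}]$ is nonprimitive, so it lies in $A_t^{np}$. Dependence only on $[x]$ is immediate since conjugation commutes with taking powers.

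For surjectivity, I would take $[y] \in A_t^{np}$ and apply the Remark to write $y = x^k$ with $x \in G$ primitive and $k \geq 2$. Condition $(*)$ forces $x \in \mathcal{A}$; Lemma~\ref{lem: conjugates}(4) gives $\|[x]\| = t/k =: s$; and then $\iota([x]) = [x^{t/s}] = [y]$. To check that $s$ is a proper divisor, I would note $s < t$ (since $k \geq 2$) and $s > 1$ (since the only length-one elements of $G$ are the torsion generators $a, b, b^{-1}$, whereas $x$ has infinite order). For injectivity, from $\iota([x]) = \iota([y])$ with $[x] \in A_s^p$ and $[y] \in A_r^p$, conjugating a representative inside $[x]$ reduces the equality of classes to $x^{t/s} = y^{t/r}$ in $G$ with both $x$ and $y$ primitive; the uniqueness clause of the Remark then forces $x = y$ and $t/s = t/r$, hence $[x] = [y]$ and $s = r$.

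The only slightly delicate step is excluding $s = 1$ in the surjectivity argument, which relies on recognizing that a primitive element of the infinite-order set $\mathcal{A}$ cannot have word length one. Once this is in hand, the whole argument is essentially a packaging of the uniqueness of primitive roots in $G$, which is the substantive fact doing the work.
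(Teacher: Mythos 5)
Your proposal is correct and follows essentially the same route as the paper: well-definedness from the commutation of powers with conjugation, surjectivity via Lemma~\ref{lem: conjugates}(4) together with the observation that $\|[x]\|=1$ would force $x$ to be conjugate to a torsion generator, and injectivity from the uniqueness of primitive roots. The only difference is that you actually carry out the injectivity verification (the two sub-claims the paper leaves to the reader), which is a welcome completion rather than a deviation.
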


\begin{proof} $\iota$ is well-defined since powers commute with conjugation. To prove surjectivity, suppose $[y] \in A^{np}_t$ and hence there exists $[x] \in A^{p}_s$ so that $[x]^{n}=[y]$, where $n$ is a positive integer greater than 1. By item (4) of Lemma \ref{lem: conjugates},  $n \|[x]\|=\|[y]\|$ and hence $s$ divides $t$.  If $s=1$, then $n=t$ and $\|[x]\|=1$.  So $x$ is conjugate to $a$ or $b^{\pm1}$, which implies $x$ has finite order.  Thus $s$ properly divides $t$.

Injectivity follows from establishing the following two items, which we leave to the reader.  

1) if $[x_1] \neq [x_2]$
in $A^{p}_s$, then $\iota ([x_1])$  is not conjugate to 
$\iota ([x_2])$.

2) if $s_1$  and $s_2$ divide $t$ , $s_1 \neq s_2$,
then $\iota (A^{p}_{s_1}) \cap  \iota (A^{p}_{s_2})=\
 \emptyset$.

\end{proof}

 We have established,

\begin{prop}\label{prop:primitives sum}
Suppose $\mathcal{A}\subset G$ satisfies condition (*).  Then
\begin{equation*}
A^{p}_{t}=A_{t}- \bigcupdot_{s | t} \iota(A^{p}_{s}) \text{  and  } |A^{p}_{t}|=|A_{t}|- \sum_{s | t} |A^{p}_{s}|
\end{equation*} 
where the union and sum  are  over all proper divisors, $s$, of  $t$. \end{prop}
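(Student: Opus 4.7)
The proposition is an immediate bookkeeping consequence of the preceding bijection lemma, so the plan is to assemble those pieces cleanly rather than to prove anything genuinely new.

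First I would observe the basic dichotomy: every conjugacy class in $A_{t}$ is either primitive or not, and these two possibilities are mutually exclusive by definition. Hence we have the disjoint decomposition $A_{t}=A_{t}^{p}\,\dot\cup\, A_{t}^{np}$, so the set equality in the proposition reduces to the claim $A_{t}^{np}=\bigcupdot_{s\mid t}\iota(A_{s}^{p})$, where $s$ ranges over proper divisors of $t$.

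Next I would invoke Lemma \ref{lem: bijection}, which states exactly that $\iota\colon \bigcupdot_{s\mid t}A_{s}^{p}\to A_{t}^{np}$ is a well-defined bijection, where the union on the left is disjoint (this was item (2) of the injectivity statement). Surjectivity of $\iota$ gives $A_{t}^{np}\subseteq \bigcup_{s\mid t}\iota(A_{s}^{p})$; injectivity together with the fact that $\iota$ lands in $A_{t}^{np}$ gives that the union on the right is disjoint and contained in $A_{t}^{np}$. This yields the set equality $A_{t}^{np}=\bigcupdot_{s\mid t}\iota(A_{s}^{p})$, and substituting into $A_{t}^{p}=A_{t}\setminus A_{t}^{np}$ gives the first displayed formula.

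For the cardinality statement I would simply take cardinalities in the disjoint decomposition $A_{t}=A_{t}^{p}\,\dot\cup\,\bigcupdot_{s\mid t}\iota(A_{s}^{p})$ and use that $\iota$ is a bijection (so $|\iota(A_{s}^{p})|=|A_{s}^{p}|$) together with disjointness of the union to replace the right-hand term by $\sum_{s\mid t}|A_{s}^{p}|$. Rearranging gives $|A_{t}^{p}|=|A_{t}|-\sum_{s\mid t}|A_{s}^{p}|$. There is no real obstacle here; the entire content of the proposition is packaged into Lemma \ref{lem: bijection}, and the only point worth being careful about is that the union on the left of $\iota$ is genuinely disjoint (so that taking images under a bijection preserves disjointness), which is exactly item (2) of the injectivity argument in that lemma.
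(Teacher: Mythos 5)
Your proposal is correct and takes essentially the same approach as the paper, which presents the proposition as already ``established'' by Lemma \ref{lem: bijection}; you have simply made explicit the decomposition $A_{t}=A^{p}_{t}\cupdot A^{np}_{t}$ and the cardinality bookkeeping that the paper leaves implicit.
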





 



Our goal in the next few sections is to compute the asymptotics as $t\to\infty$ of the function $|A^p_{2t}|$  and $|A^p_{\leq 2t}|$ for various choices of $\mathcal{A}$.

Set  $\mathcal{R}= \{\textrm{$xy$ : $x, y$ are distinct order 2 elements in $G$}\}$. Denoting the commutator of $x$ and $y$ by $\left[x,y\right]$, and noting that there is one conjugacy class of order 2 elements, we may write
\begin{eqnarray*}
\mathcal{R}&=& \left\{[xax^{-1},xyx^{-1}] : x,y \in G, y \neq a \right\}\\
&=& \left\{x[a,y]x^{-1} : x,y \in G, y \neq a \right\}.
\end{eqnarray*}

We call the elements of $\mathcal{R}$ {\it reciprocal words}.  We remark that   $\mathcal{R}$ is closed under taking powers and moreover   if an element   of 
$\mathcal{R}$ is a power of the unique, primitive $y \in G$, then 
$y$ is also in  $\mathcal{R}$. Thus $\mathcal{R}$ satisfies condition $(*)$. Denote the primitive conjugacy classes in $R$ by $R^{p}$. The conjugacy classes of $\mathcal{R}$ correspond exactly to the set of  reciprocal geodesics on the modular orbifold.  This is because a lift of a reciprocal geodesic is the product of a pair of order two elements in $PSL(2,\mathbb{Z})$.

\begin{rem}
Any reciprocal word  conjugated to an (ab)-word has the form
$$w=[a,\gamma]=ab^{\epsilon_1}...ab^{\epsilon_{t}}ab^{-\epsilon_{t}}...ab^{-\epsilon_1}$$ where $\epsilon_i=\pm1$ and $\gamma$ is a (bb)-word.  With this in mind we define the normal form of a reciprocal word to be
$[a,\gamma]$ where  $\gamma$ is   $\textrm{a (bb)-word}$ and their full set to be denoted,

\begin{equation*}\mathcal{N}=\{[a,\gamma] : \gamma\,\, \textrm{a (bb)-word}\}.
\end{equation*} 

Note that nonprimitive elements of $\mathcal{N}$ are powers of elements of the same form.  That is, $[a,\gamma]=[a,\beta]^n$, where $[a,\beta]$ is primitive.
\end{rem}

  
 

   




\section{Binary Words and the Normal Form for a Reciprocal Word}
\label{countingbinarywords}
Our interest is in counting words in $G$. To make our computations less cumbersome we identify $(ab)$-words in $G$ with binary words.  Namely, we identify the $(ab)$-word,\newline $ab^{\epsilon_0}ab^{\epsilon_1}\dots ab^{\epsilon_{t-1}}$ with the binary word, $(\epsilon_0,\epsilon_1,\dots,\epsilon_{t-1})$, where $\epsilon_i=\pm1$.  Denote the set of all binary words of length $t$ by $X_t$.

Focusing on reciprocal words, the length of a reciprocal word in normal form,  $\mathcal{N}$, is a multiple of 4.   Hence we identify $\mathcal{N}$ with the subset  

\begin{equation*}
Y_{2t}=\{(\epsilon_0,\dots,\epsilon_{2t-1}): \epsilon_j=-\epsilon_{2t-j-1},\, \textrm{for all} \,j=0,\dots,2t-1\}\subset X_{2t}.
\end{equation*} 

 Conjugate words of an $(ab)$-word in the same $(ab)$-form are cyclic permutations of even order.   With this in mind, using the bijection, we denote this cyclic action on $X_{2t}$ by 
 $\alpha^k(\epsilon_0,\dots,\epsilon_{2t-1})=(\epsilon'_0,\dots,\epsilon'_{2t-1})$, where $\epsilon'_j=\epsilon_{j-k}$ for all $j=0,\dots,2t-1$.  Here we use the convention that the subscripts are modulo $2t$.

\begin{lem}
\label{lem:alpha^k(x)}
Fix $k$. For any  $(\epsilon_0,\dots,\epsilon_{2t-1})\in X_{2t}$ we have,
 
\begin{itemize}

\item   $\alpha^{k}(\epsilon_0,\dots,\epsilon_{2t-1})=(\epsilon_0,\dots,\epsilon_{2t-1})$ if and only if 

\begin{equation}
\label{eq:alpha^k(x)=x}
\epsilon_j=\epsilon_{j-k},\,  \text{ for all } j.
\end{equation}

\item  $\alpha^k(\epsilon_0,\dots,\epsilon_{2t-1})\in{Y}_{2t}$ if and only if 

\begin{equation}
\label{eq:scriptNcond}
\epsilon_{j-k}=-\epsilon_{2t-j-1-k},\, \text{ for all } j.
\end{equation}

\end{itemize}
\end{lem}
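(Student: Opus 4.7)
The plan is straightforward: both items are obtained by directly unwinding the definition of the cyclic shift $\alpha^k$ and, for the second item, the definition of $Y_{2t}$. There is no substantive obstacle here beyond being careful with the indexing convention (subscripts mod $2t$).

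For item (1), I would write $\alpha^k(\epsilon_0,\dots,\epsilon_{2t-1})=(\epsilon'_0,\dots,\epsilon'_{2t-1})$ with $\epsilon'_j=\epsilon_{j-k}$, so that $\alpha^k$ fixes the word precisely when $\epsilon_j=\epsilon'_j$ for every $j$, which is condition \eqref{eq:alpha^k(x)=x}. Both directions are immediate from this.

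For item (2), I would invoke the defining condition of $Y_{2t}$, namely that $(\delta_0,\dots,\delta_{2t-1})\in Y_{2t}$ iff $\delta_j=-\delta_{2t-j-1}$ for all $j$. Applying this with $\delta_j=\epsilon'_j=\epsilon_{j-k}$ rewrites membership of $\alpha^k(\epsilon_0,\dots,\epsilon_{2t-1})$ in $Y_{2t}$ as $\epsilon_{j-k}=-\epsilon_{2t-j-1-k}$ for all $j$, i.e.\ condition \eqref{eq:scriptNcond}. Again both directions are tautological.

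The only thing worth flagging for the reader is that the statement quantifies over all $j$, so in both items one should note that the substitution $j\mapsto j+k$ (mod $2t$) runs over all residues, and hence the stated conditions may equivalently be written without shifted indices. Since the lemma does not require this reformulation, I would not belabor it. The proof is essentially a two–line verification, one line per bullet.
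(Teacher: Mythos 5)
Your proposal is correct and follows exactly the paper's own argument: item (1) by definition of the shift, and item (2) by writing $\epsilon'_j=\epsilon_{j-k}$ and substituting into the defining condition $\epsilon'_j=-\epsilon'_{2t-j-1}$ of $Y_{2t}$. Nothing further is needed.
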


\begin{proof} Item (1) follows from the definition of the cyclic action. 
For item (2), set 

\[(\epsilon'_0,\dots,\epsilon'_{2t-1})=\alpha^k(\epsilon_0,\dots,\epsilon_{2t-1}).\]  

$(\epsilon'_0,\dots,\epsilon'_{2t-1})\in Y_{2t}$ if and only if $\epsilon'_j=-\epsilon'_{2t-j-1}$, but using the definition of the cyclic action this is equivalent to $\epsilon_{j-k}=-\epsilon_{2t-j-1-k}$. 

\end{proof}

\begin{lem}
\label{lem:scriptN}
If $(\epsilon_0,\dots,\epsilon_{2t-1})\in{Y_{2t}}$, then $\alpha^t(\epsilon_0,\dots,\epsilon_{2t-1})\in Y_{2t}$.
\end{lem}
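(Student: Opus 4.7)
The plan is to verify the condition in item (2) of Lemma \ref{lem:alpha^k(x)} with $k=t$. By that lemma, what needs to be shown is that
\[
\epsilon_{j-t} = -\epsilon_{2t-j-1-t} = -\epsilon_{t-j-1}
\]
holds for all $j$, where indices are taken modulo $2t$.

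The idea is to apply the defining relation of $Y_{2t}$, namely $\epsilon_i = -\epsilon_{2t-i-1}$, to the index $i = j-t$. First I would substitute to get
\[
\epsilon_{j-t} = -\epsilon_{2t-(j-t)-1} = -\epsilon_{3t-j-1}.
\]
Then I would observe that modulo $2t$ we have $3t - j - 1 \equiv t - j - 1 \pmod{2t}$, so the right-hand side equals $-\epsilon_{t-j-1}$, which is exactly what is needed.

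There is essentially no obstacle here beyond bookkeeping with the mod-$2t$ conventions; the lemma is really saying that the map $\alpha^t$ preserves the palindromic antisymmetry condition defining $Y_{2t}$, which is a direct index shift. I would conclude by citing Lemma \ref{lem:alpha^k(x)}(2) with $k=t$ to assemble the verification into the statement $\alpha^t(\epsilon_0,\dots,\epsilon_{2t-1}) \in Y_{2t}$.
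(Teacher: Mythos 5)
Your proposal is correct and follows essentially the same route as the paper: apply the defining relation $\epsilon_i=-\epsilon_{2t-i-1}$ at $i=j-t$ to get $\epsilon_{j-t}=-\epsilon_{3t-j-1}$, reduce the index modulo $2t$ to $-\epsilon_{t-j-1}$, and invoke Lemma \ref{lem:alpha^k(x)} with $k=t$. No issues.
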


\begin{proof}
We need to show that (\ref{eq:scriptNcond}) holds for the case $k=t$.  That is, $\epsilon_{j-t}=-\epsilon_{t-j-1}$.  Now since $(\epsilon_0,\dots,\epsilon_{2t-1})\in Y_{2t}$ we have that

\begin{eqnarray*}
\epsilon_{j-t} &=& -\epsilon_{2t-(j-t)-1}\\
&=&-\epsilon_{3t-j-1}\\
&=&-\epsilon_{t-j-1}.
\end{eqnarray*}

\end{proof}

\begin{lem}
\label{lem:powersinN}
Fix $k$ and $(\epsilon_0,\dots,\epsilon_{2t-1})\in Y_{2t}.$ Then $\alpha^k(\epsilon_0,\dots,\epsilon_{2t-1})\in Y_{2t}$ if and only if

\begin{equation}
\label{eq:iffTuple}
\epsilon_j=\epsilon_{j-2k}, \,\textrm{for all $j$}.
\end{equation}

 \end{lem}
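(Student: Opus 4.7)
The plan is to chain together two equivalences: the membership criterion from Lemma \ref{lem:alpha^k(x)}(2) and the defining $Y_{2t}$ identity applied to the original tuple. After a single index substitution and one shift of the free index, the right-hand side of the criterion turns into the stated relation $\epsilon_j = \epsilon_{j-2k}$, with every intermediate step being an honest ``if and only if.'' Since each step is reversible, both implications of the biconditional drop out simultaneously, which is why I expect a single direct computation, rather than two separate arguments, to be the most efficient route.

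Concretely, by Lemma \ref{lem:alpha^k(x)}(2), the assertion $\alpha^k(\epsilon_0,\dots,\epsilon_{2t-1})\in Y_{2t}$ is equivalent to
\[
\epsilon_{j-k}\;=\;-\epsilon_{2t-j-1-k}\qquad\text{for all } j.
\]
I would then feed the index $i = 2t-j-1-k$ into the hypothesis $\epsilon_i = -\epsilon_{2t-i-1}$ coming from $(\epsilon_0,\dots,\epsilon_{2t-1})\in Y_{2t}$; this collapses the right-hand side to $-\epsilon_{2t-j-1-k} = \epsilon_{j+k}$. So the displayed condition becomes $\epsilon_{j-k} = \epsilon_{j+k}$ for all $j$. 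Finally, a shift of the free index $j \mapsto j+k$ (working modulo $2t$, as in Lemma \ref{lem:alpha^k(x)}) rewrites this as $\epsilon_j = \epsilon_{j-2k}$ for all $j$, which is \eqref{eq:iffTuple}.

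The only real point of care is the bookkeeping with subscripts modulo $2t$: both the substitution $i = 2t-j-1-k$ and the final shift $j \mapsto j+k$ must be carried out within the cyclic indexing convention that was set up just before Lemma \ref{lem:alpha^k(x)}. Since every step in the chain above is an equivalence rather than merely an implication, there is no need to argue the two directions separately, and I do not anticipate any obstacle beyond keeping the indices straight.
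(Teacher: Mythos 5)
Your argument is correct and is essentially the computation the paper carries out: both rely on Lemma \ref{lem:alpha^k(x)}(2) together with the defining relation of $Y_{2t}$ and an index substitution modulo $2t$; you merely package the two implications as a single chain of equivalences, whereas the paper writes out the forward and reverse substitutions separately. (The only nit is that your final reindexing yields $\epsilon_j=\epsilon_{j+2k}$ for all $j$, which is the same statement as \eqref{eq:iffTuple} after replacing $j$ by $j-2k$.)
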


\begin{proof}
The condition that $(\epsilon_0,\dots,\epsilon_{2t-1})\in Y_{2t}$ is $\epsilon_j=-\epsilon_{2t-j-1}$ for all $j$, while the condition $\alpha^k(\epsilon_0,\dots,\epsilon_{2t-1})\in Y_{2t}$ is equation (\ref{eq:scriptNcond}).  Replace $j$ with $2t-j-1+k$ in equation (\ref{eq:scriptNcond}).  We get:

\begin{eqnarray}
\epsilon_{2t-j-1}&=&-\epsilon_{2t-(2t-j-1+k)-1-k}\label{eq:3.3}\\
&=&-\epsilon_{j-2k}.
\end{eqnarray}
 
Using the condition to be in $Y_{2t}$ we replace the left-hand side of equation (\ref{eq:3.3}) with $-\epsilon_j$ and we have established equation (\ref{eq:iffTuple}).

For the reverse direction, replace $\epsilon_j$ with $\epsilon_{j-2k}$ in the condition to be an element in $Y_{2t}$.  That is, $\epsilon_{j-2k}=-\epsilon_{2t-j-1}$.  Replacing $j$ with $j+k$ in this equation yields the desired result.

\end{proof}

With an eye toward applications to words in the group $G$ we define a notion of primitivity for a binary word.

\begin{definition} For any positive integer $t$, an element $x\in X_{t}$ is called nonprimitive if there exists $s$ which properly divides $t$ so that $x$ is the juxtaposition of $s$-subtuples where each subtuple is the same. That is, there exists $z\in X_{t/s}$ so that $x=z^s$. Otherwise, we say $x$ is primitive.  
\end{definition}

Every element in $X_t$ is the positive integer power of some primitive subtuple.  Moreover, if $x\in Y_{2t}\subset X_{2t}$ is nonprimitive then $x=y^{s}$, where $y\in Y_{2t/s}$.

Let $\mathcal{O}$ be the orbit in $X_{2t}$ of an element in $Y_{2t} \subset X_{2t}$ under the action of $\alpha$.  We are interested in $\mathcal{O}\cap Y_{2t}$.  

We set $k_0$ to be the {\it smallest power of $\alpha$} for which an element of $\mathcal{O}\cap Y_{2t}$ maps back to $\mathcal{O}\cap Y_{2t}$. Note that $k_0$ is an invariant of the orbit and $1\leq k_0\leq t$.

\begin{lem} \label{lem: orbits of x} We have

\begin{enumerate}
\item
For any integer $n$, $\alpha^{nk_0}(\mathcal{O}\cap Y_{2t})=\mathcal{O}\cap Y_{2t}$.
\item
$\alpha^l(\mathcal{O}\cap Y_{2t})\cap(\mathcal{O}\cap Y_{2t})=\emptyset$, for $l$ not a multiple of $k_0$.
\item
$\alpha^{k_0}(x)\neq x$, for any $x\in\mathcal{O}\cap Y_{2t}$.
\end{enumerate}
\end{lem}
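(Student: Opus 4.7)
The plan is to handle the three items in order, with item (3) carrying essentially all the content; items (1) and (2) are formal consequences of Lemma \ref{lem:powersinN} together with the division algorithm. For item (1) I would first show the inclusion $\alpha^{k_0}(\mathcal{O}\cap Y_{2t})\subseteq\mathcal{O}\cap Y_{2t}$ and then promote it to an equality using that $\alpha^{k_0}$ acts injectively on the finite set $\mathcal{O}\cap Y_{2t}$. The inclusion follows from Lemma \ref{lem:powersinN}: for any $y\in Y_{2t}$ with entries $\epsilon^{y}_{j}$, the condition $\alpha^{k_0}(y)\in Y_{2t}$ reads $\epsilon^{y}_{j}=\epsilon^{y}_{j-2k_0}$ for all $j$. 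Every $y\in\mathcal{O}$ has the form $\alpha^m(x_0)$ for the witness $x_0\in\mathcal{O}\cap Y_{2t}$ singled out by the definition of $k_0$, and the substitution $j\mapsto j+m$ shows that the condition on $y$ is equivalent to the corresponding condition on $x_0$, which is given. Iterating and using that $\alpha$ is a bijection of $X_{2t}$ gives the claim for every integer $n$.

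Item (2) would then follow by division with remainder: if some $x\in\mathcal{O}\cap Y_{2t}$ has $\alpha^l(x)\in\mathcal{O}\cap Y_{2t}$, write $l=qk_0+r$ with $0\leq r<k_0$, and apply $\alpha^{-qk_0}$ (which preserves $\mathcal{O}\cap Y_{2t}$ by item (1)) to obtain $\alpha^r(x)\in\mathcal{O}\cap Y_{2t}$. Minimality of $k_0$ then forces $r=0$.

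Item (3) is where the real content lies. Fix $x=(\epsilon_0,\ldots,\epsilon_{2t-1})\in\mathcal{O}\cap Y_{2t}$ and let $p$ be its fundamental $\alpha$-period, equivalently the least period of $x$ as a cyclic binary word and the cardinality of $\mathcal{O}$. I would first prove the subclaim that $p$ is even. Suppose for contradiction that $p$ is odd; since $p\mid 2t$ this forces $p\mid t$, and because $2$ is invertible modulo odd $p$ there is a unique $k_\ast\in\{0,\ldots,p-1\}$ with $2k_\ast\equiv -1\pmod{p}$. Combining the $p$-periodicity of $x$ with the $Y_{2t}$-defining relation $\epsilon_j=-\epsilon_{2t-j-1}$ and reducing indices modulo $p$ at the index $j\equiv k_\ast\pmod{p}$ yields $\epsilon_{k_\ast}=-\epsilon_{k_\ast}$, impossible for $\epsilon_{k_\ast}=\pm 1$. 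Once $p$ is known to be even, Lemma \ref{lem:powersinN} identifies the set of $k\geq 1$ with $\alpha^k(x)\in Y_{2t}$ as the positive multiples of $p/2$, so $k_0=p/2$; since the orbit length is $p>p/2$, it follows that $\alpha^{k_0}(x)\neq x$.

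The main obstacle will be the parity subclaim in item (3); once $p$ is known to be even, identifying $k_0$ with $p/2$ and concluding $\alpha^{k_0}(x)\neq x$ are immediate, and the remaining arguments reduce to routine bookkeeping with Lemmas \ref{lem:alpha^k(x)} and \ref{lem:powersinN}.
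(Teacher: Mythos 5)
Your proposal is correct, and items (1) and (2) follow the paper's proof almost verbatim (the paper sandwiches $l$ between consecutive multiples of $k_0$ where you invoke the division algorithm; both reduce to Lemma \ref{lem:powersinN} plus the shift-invariance of the condition $\epsilon_j=\epsilon_{j-2k}$). Item (3), however, is argued by a genuinely different route. The paper proceeds by contradiction: if $\alpha^{k_0}(x)=x$ then $x$ is filled out by a repeating block which must itself lie in some $Y_{k_0}$, forcing $k_0$ even, and then $k=k_0/2$ already satisfies $\epsilon_j=\epsilon_{j-2k}$, contradicting minimality. You instead compute $k_0$ outright: letting $p=|\mathcal{O}|$ be the fundamental period, you show $p$ is even via the congruence $2k_\ast\equiv-1\pmod p$ (which for odd $p$ would produce an index with $\epsilon_{k_\ast}=-\epsilon_{k_\ast}$), and then Lemma \ref{lem:powersinN} gives that $\alpha^k(x)\in Y_{2t}$ exactly when $(p/2)\mid k$, so $k_0=p/2<p$ and $\alpha^{k_0}(x)\neq x$. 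Your version buys two things: it sidesteps the slightly delicate step in the paper where the repeating block of length $k_0$ is asserted to tile $x$ and lie in $Y_{k_0}$ (which implicitly uses $k_0\mid 2t$; working with the fundamental period avoids this), and the explicit identification $k_0=p/2$ immediately delivers the follow-up statements (Proposition \ref{prop:2distinctelts}, that $\mathcal{O}\cap Y_{2t}=\{x,\alpha^{k_0}(x)\}$, and the criterion that $x$ is primitive iff $k_0=t$) with no further work. The paper's argument is more local and self-contained at this point but defers that bookkeeping to the subsequent propositions.
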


\begin{proof}  Set $x=(\epsilon_0,\dots,\epsilon_{2t-1})\in\mathcal{O}\cap Y_{2t}$ throughout this proof.

To prove item (1), it's enough to show that for any integer $n$, $\alpha^{nk_0}(\mathcal{O}\cap Y_{2t})\subset\mathcal{O}\cap Y_{2t}$.  Noting that $\alpha^{k_0}(x)\in\mathcal{O}\cap Y_{2t}$, we have by Lemma \ref{lem:powersinN}, $\epsilon_j=\epsilon_{j-2k_0}=\dots=\epsilon_{j-2nk_0}$, for all $j$. Therefore, again by Lemma \ref{lem:powersinN}, $\alpha^{nk_0}(x)\in\mathcal{O}\cap Y_{2t}$.

For item (2), suppose for contradiction $\alpha^l(x)\in\mathcal{O}\cap Y_{2t}$ and $nk_0<l<(n+1)k_0$.  Then $\alpha^{l-nk_0}(\alpha^{nk_0}(x))=\alpha^l(x)\in\mathcal{O}\cap Y_{2t}$. On the other hand by item (1), $\alpha^{nk_0}(x)\in\mathcal{O}\cap Y_{2t}$ and so $\alpha^{l-nk_0}(\mathcal{O}\cap Y_{2t})\cap(\mathcal{O}\cap Y_{2t})\neq\emptyset$, contradicting our assumption that $k_0$ is minimal.

To prove item (3), assume $\alpha^{k_0}(x)=x$ and hence by equation (\ref{eq:alpha^k(x)=x}), $\epsilon_j=\epsilon_{j-k_0}$, for all $j$, and there is a repeating subtuple $y$  of length $k_0$ which fills out 
$x$.  Moreover, this subtuple must lie in $Y_{k_0}$ and since the elements of $Y_{k_0}$ have   even length, $k_0$ must be even. 
 On the other hand, setting $k=\frac{k_0}{2}$ we have $\epsilon_{j-2k}=\epsilon_{j-k_0}=\epsilon_{j}$, for all $j$, where the last equality follows from equation (\ref{eq:alpha^k(x)=x}).  By Lemma \ref{lem:powersinN} this contradicts  the minimality of $k_0$ and  thus $\alpha^{k_0}(x)\neq x$.

\end{proof}

\begin{prop}
\label{prop:2distinctelts}
Assume  $x \in \mathcal{O}\cap Y_{2t}$.
$\alpha^{nk_0}(x)=x$ for $n$ even, and $\alpha^{nk_0}(x)=\alpha^{k_{0}}(x)$ for $n$ odd. Namely, $\mathcal{O}\cap Y_{2t}$ consists of 2 distinct elements, 
$\{x,\alpha^{k_0}(x)\}$.
\end{prop}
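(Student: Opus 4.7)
The plan is to combine the three items of Lemma \ref{lem: orbits of x} with the tuple condition from Lemma \ref{lem:powersinN} to pin down $\mathcal{O}\cap Y_{2t}$ completely. Write $x=(\epsilon_0,\dots,\epsilon_{2t-1})$ throughout.

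First I would prove the key identity $\alpha^{2k_0}(x)=x$. Because $\alpha^{k_0}$ maps $\mathcal{O}\cap Y_{2t}$ into itself by the very definition of $k_0$, we have $\alpha^{k_0}(x)\in Y_{2t}$. Applying Lemma \ref{lem:powersinN} with $k=k_0$ then gives $\epsilon_j=\epsilon_{j-2k_0}$ for all $j$, which is precisely condition (\ref{eq:alpha^k(x)=x}) of Lemma \ref{lem:alpha^k(x)} with exponent $2k_0$. Item (1) of Lemma \ref{lem:alpha^k(x)} therefore yields $\alpha^{2k_0}(x)=x$.

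Next I would bootstrap this to all integers $n$. For $n=2m$ even, iterating gives $\alpha^{nk_0}(x)=(\alpha^{2k_0})^m(x)=x$; for $n=2m+1$ odd, $\alpha^{nk_0}(x)=\alpha^{k_0}\bigl((\alpha^{2k_0})^m(x)\bigr)=\alpha^{k_0}(x)$. Thus the set of $\alpha^{nk_0}$-translates of $x$ is exactly $\{x,\alpha^{k_0}(x)\}$, and these two elements are distinct by item (3) of Lemma \ref{lem: orbits of x}.

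Finally, I would invoke item (2) of Lemma \ref{lem: orbits of x} to check that these two elements account for all of $\mathcal{O}\cap Y_{2t}$: given any $y\in\mathcal{O}\cap Y_{2t}$, write $y=\alpha^l(x)$ for some $l$; membership in $\mathcal{O}\cap Y_{2t}$ forces $l$ to be a multiple of $k_0$, so $y$ equals either $x$ or $\alpha^{k_0}(x)$ by the previous paragraph.

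The only mildly subtle step is the first one. The rest is formal manipulation and an application of Lemma \ref{lem: orbits of x}. The conceptual point is that the two symmetry conditions (membership of $x$ in $Y_{2t}$, and the invariance of $Y_{2t}$ under $\alpha^{k_0}$ at $x$) together impose exactly the $2k_0$-periodicity needed to make $\alpha^{2k_0}$ act as the identity on $x$.
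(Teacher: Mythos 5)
Your proof is correct and follows essentially the same route as the paper: use Lemma \ref{lem:powersinN} to extract the $2k_0$-periodicity $\epsilon_j=\epsilon_{j-2k_0}$, conclude $\alpha^{2k_0}(x)=x$ via Lemma \ref{lem:alpha^k(x)}, split into even and odd $n$, and finish with items (2) and (3) of Lemma \ref{lem: orbits of x}. The only cosmetic remark is that $\alpha^{k_0}(x)\in\mathcal{O}\cap Y_{2t}$ is really supplied by item (1) of Lemma \ref{lem: orbits of x} rather than ``the very definition of $k_0$'' (which only asserts that \emph{some} element maps back), but the paper makes the identical elision.
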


\begin{proof} Set $x=(\epsilon_0,\dots,\epsilon_{2t-1})\in\mathcal{O}\cap Y_{2t}$.
We note, by items (1) and (2) of Lemma \ref{lem: orbits of x}, that  all the points in $\mathcal{O}\cap Y_{2t}$ are of the form $\alpha^{nk_0}(x)$. Since $\alpha^{k_0}(x)\in\mathcal{O}\cap Y_{2t}$ and assuming $n$ is even, we have 
$\epsilon_j=\epsilon_{j-2k_{0}}=\epsilon_{j-4k_{0}}=\dots=\epsilon_{j-nk_{0}}$ by Lemma \ref{lem:powersinN}. Hence by Lemma \ref{lem:alpha^k(x)}, $\alpha^{nk_0}(x)=x$.  For odd $n=2m+1$, we have that $\alpha^{nk_0}(x)=\alpha^{(2m+1)k_0}(x)=\alpha^{k_0}(\alpha^{2mk_0}(x))=\alpha^{k_0}(x)$.  However, $\alpha^{k_0}(x)\neq x$ by item (3) of Lemma \ref{lem: orbits of x}.

\end{proof}

\begin{rem}A schematic picture emerges.  We picture  $Y_{2t}$ as the diagonal in $X_{2t}$ and the $\langle \alpha\rangle$-orbit of a point in $Y_{2t}$ as intersecting $Y_{2t}$ in exactly 2 distinct points, and the number of  all orbit  points in $X_{2t}$ being $2k_{0}$.
\end{rem}

\begin{prop}
Let $x \in Y_{2t}$. Then $x$ is primitive if and only if $k_0=t$.
\end{prop}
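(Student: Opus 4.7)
The plan is to show that $k_0$ equals half the minimal cyclic period of $x$; given this identity, the stated equivalence is immediate, since $x$ is primitive precisely when its minimal period is $2t$.

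By Lemma \ref{lem:powersinN}, for any $k$ the condition $\alpha^k(x) \in Y_{2t}$ is equivalent to $\epsilon_j = \epsilon_{j-2k}$ for all $j$, i.e., to $2k$ being a cyclic period of $x$. Hence $k_0$ is the smallest positive integer $k$ for which $2k$ is a period of $x$. Let $p$ be the minimal cyclic period of $x$. I first argue that $p$ is even. If $x$ is nonprimitive, then by the observation following the definition of (non)primitive binary words one can write $x = y^s$ with $s \geq 2$ a proper divisor of $2t$ and $y \in Y_{2t/s}$; since the antisymmetry defining $Y_{2t/s}$ would force a middle coordinate to equal its own negative if $2t/s$ were odd, $2t/s$ must be even, and so the minimal period $p = 2t/s$ is even. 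If $x$ is primitive then $p = 2t$ is even by definition. Either way, $p/2$ is a positive integer and $2(p/2) = p$ is a period of $x$, so $k_0 \leq p/2$. Conversely, Proposition \ref{prop:2distinctelts} gives $\alpha^{2k_0}(x) = x$, so $2k_0$ is a period of $x$, and the minimality of $p$ forces $p \mid 2k_0$, i.e., $k_0 \geq p/2$. Thus $k_0 = p/2$.

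With the identity $k_0 = p/2$ in hand, the assertion $k_0 = t$ is equivalent to $p = 2t$, which is in turn equivalent to $x$ being primitive. The delicate ingredient is the evenness of $p$ for $x \in Y_{2t}$; without it, $p/2$ need not be an integer and the clean identification $k_0 = p/2$ would fail on parity grounds. An equivalent presentation, if one prefers to avoid invoking the parity of $p$ directly, is to argue the reverse implication by cases on $s$: when $s \geq 3$ the bound $k_0 \leq 2t/s < t$ is already strict, while the borderline case $s = 2$ (giving $\alpha^t(x) = x$) is excluded by Lemma \ref{lem: orbits of x}(3), which asserts $\alpha^{k_0}(x) \neq x$.
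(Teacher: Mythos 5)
Your proof is correct and follows essentially the same route as the paper's: both hinge on translating $\alpha^{k}(x)\in Y_{2t}$ into the period condition $\epsilon_j=\epsilon_{j-2k}$ via Lemma \ref{lem:powersinN}, using the evenness of the minimal repeating block and item (3) of Lemma \ref{lem: orbits of x} to rule out the borderline case. Your explicit identity $k_0=p/2$ (half the minimal cyclic period) is a sharper packaging of what the paper leaves implicit, and in particular it fills in the paper's rather terse justification of the direction ``primitive $\Rightarrow k_0=t$.''
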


\begin{proof}
We first remark that $x$ is nonprimitive if and only if there exists a minimal subtuple $y\in Y_{2s}$ repeated $\frac{t}{s}$-times, where $s$ properly divides $t$, giving $x$.  We have $k_0<2s\leq t$, where the left inequality follows from item (3) of Lemma \ref{lem: orbits of x}.  Thus $k_0<t$. On the other hand, if $x$ is primitive then there is no proper $s$ and thus $k_0=t$.
\end{proof}

We now return to the main objective of this section to consider the normal form of reciprocal words.  We remind the reader of the bijection
\[\mathcal{N}_{4t}=\{[a,\gamma] : \gamma\,\, \textrm{a (bb)-word of length}\,\, 2t-1\}\rightarrow Y_{2t}\]

given by

\[ab^{\epsilon_0}...ab^{\epsilon_{t-1}}ab^{-\epsilon_{t-1}}...ab^{-\epsilon_0}\mapsto (\epsilon_0,\dots,\epsilon_{t-1},-\epsilon_{t-1},\dots,-\epsilon_0).\]
Using the bijection with Proposition \ref{prop:2distinctelts}, and noting  that when  $[a,\beta]$ is primitive,  $\beta$ cannot be   of order two,  we have proven,
 
\begin{lem} \label{lem: normal form}
 Each conjugacy class of an element of $\mathcal{R}$  has exactly two  representatives  in the normal form $\mathcal{N}$. Namely, the two conjugates in $\mathcal{N}$ are $[a,\beta]^n$ and $[a,\beta^{-1}]^n$,  where $[a,\beta]$ is primitive, $n$ is a unique positive integer, and $\beta$ is a unique (bb)-word not of order 2.
 \end{lem}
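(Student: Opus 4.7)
The plan is to translate the statement into the combinatorial setting via the bijection $\mathcal{N}_{4t}\leftrightarrow Y_{2t}$ and then invoke Proposition \ref{prop:2distinctelts}. First, by Lemma \ref{lem: conjugates}(2) the $(ab)$-word conjugates of an $(ab)$-word are exactly its even cyclic permutations, and under the bijection these correspond precisely to the cyclic shift $\alpha$ on binary tuples. Consequently, the elements of $\mathcal{N}$ conjugate in $G$ to a given $[a,\gamma]\in\mathcal{N}_{4t}$ correspond to $\mathcal{O}\cap Y_{2t}$, where $\mathcal{O}$ is the $\alpha$-orbit of the binary tuple attached to $[a,\gamma]$. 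Proposition \ref{prop:2distinctelts} then gives that $|\mathcal{O}\cap Y_{2t}|=2$, establishing that the conjugacy class of $[a,\gamma]$ has exactly two normal-form representatives.

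Next I identify those two representatives. Using the remark immediately preceding the lemma, write $[a,\gamma]=[a,\beta]^n$ with $[a,\beta]\in\mathcal{N}$ primitive; the positive integer $n$ is uniquely determined since the primitive root in $G$ is unique, and $\beta$ is then a uniquely determined $(bb)$-word. Let $y\in Y_{2s}$ be the binary tuple for $[a,\beta]$, so that the tuple for $[a,\gamma]$ is $x=y^n\in Y_{2sn}$ (that is, $y$ concatenated with itself $n$ times). A direct index-tracking computation, using the defining symmetry $\epsilon_j=-\epsilon_{2s-j-1}$ of $Y_{2s}$, will show that $\alpha^s(x)$ is precisely the binary tuple corresponding to $[a,\beta^{-1}]^n$. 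Because $y$ is primitive, its minimal period in $X_{2s}$ is $2s$, and combining this with Lemma \ref{lem:powersinN} yields $k_0=s$; hence $\mathcal{O}\cap Y_{2sn}=\{x,\alpha^s(x)\}=\{[a,\beta]^n,\,[a,\beta^{-1}]^n\}$.

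Finally, the claim that $\beta$ is not of order $2$ follows by contradiction: if $\beta$ had order two then $\beta=\beta^{-1}$, so the two normal-form representatives $[a,\beta]^n$ and $[a,\beta^{-1}]^n$ would coincide, contradicting Proposition \ref{prop:2distinctelts}.

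The main obstacle is the explicit verification that $\alpha^s(x)$ corresponds to $[a,\beta^{-1}]^n$: the bijection twists the first and last halves of the binary tuple against each other with a sign flip, so indices and signs must be tracked carefully. A secondary point requiring care is confirming that $k_0$ equals $s$ exactly, rather than a proper divisor; this is where the primitivity of $y$ (and hence the exact period $2s$ of $x$) is used through Lemma \ref{lem:powersinN}.
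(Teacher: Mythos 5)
Your proposal is correct and follows essentially the same route as the paper: translate to $Y_{2t}$ via the bijection, apply Proposition \ref{prop:2distinctelts} to conclude there are exactly two normal-form representatives, and identify the second as $\alpha^{k_0}(x)$ with $k_0=s$, which corresponds to $[a,\beta^{-1}]^n$. The only differences are cosmetic: you supply details the paper leaves implicit (the index-tracking identification of $\alpha^{s}(x)$ and the verification that $k_0=s$), and you rule out $\beta$ of order two via distinctness of the two representatives rather than via the paper's observation that $\beta^2=1$ would force $[a,\beta]=(a\beta)^2$ to be nonprimitive.
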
 
  
 \begin{rem}In proving Lemma \ref{lem: normal form} we largely took a combinatorial point of view.  However, using different methods in \cite{Sar}, this lemma is  proven for primitive reciprocals  in  $PSL(2, \mathbb{Z})$.
 \end{rem}
  \vspace{.2in}


\section{Counting  conjugacy classes in   $\mathbb{Z}_2\ast \mathbb{Z}_3$} \label{sec: counting primitive conjugacy}

The goal of this section is to investigate the growth rate of primitive conjugacy classes in $G$.  We first  recall the Burnside lemma. Let $G$ be a finite group acting on a set $B$. Then,
\begin{equation*}
|B/G|= \frac{1}{|G|} \sum_{g \in G} |Fix (g)|
\end{equation*}
where $Fix(g)$ is the fixed point set of $g$.

Once again we appeal to binary words for our computations.  In this section we work with binary words of length $t$.  Fix a positive integer $t$ and recall  $X_t=\{(\epsilon_0,...,\epsilon_{t-1}): \epsilon_i=\pm1\}$.  We consider the cyclic permutation map, $\alpha :X_t \rightarrow X_t$, given by 
$(\epsilon_0,...,\epsilon_{t-1}) \mapsto  (\epsilon_{t-1},\epsilon_0....,\epsilon_{t-2}).$ Thus the group generated by $\alpha$ is cyclic of order $t$. We will compute the size of the fixed point sets $Fix(\alpha^{j})$ for $j=1,..., t$. Of course $|Fix(\alpha^t)|=|Fix(id)|=2^t$.
If $j$ is relatively prime to $t$, then  
$|Fix(\alpha^{j})|=2$.  More generally, if $(\epsilon_0,...,\epsilon_{t-1})$ is fixed by $\alpha^j$, then for each $i=0,1,...,t-1$, $\epsilon_i=\epsilon_{i+j}$, where the subscripts are modulo $t$. Since $\epsilon_i$ has two possibilities $(\pm1)$ and since the number of 
equivalence classes of $\epsilon_i$ (subscript modulo $t$) is  equal  to the $\text{gcd}(j,t)$, we have 
$|Fix(\alpha^{j})|=2^{\text{gcd}(j,t)}$.  Thus the number of points in the quotient is
\begin{equation}
\label{eq:BurnsideBinary}
|X_t/<\alpha>|=
\frac{1}{t} \sum_{j=1}^{t}  2^{\text{gcd}(j,t)}.
\end{equation}

Denote the set of all words in $G$ of $(ab)$-form with length $2t$ by

\begin{equation*}
\mathcal{W}_{2t} (ab)=
\big\{ab^{\epsilon_0}...ab^{\epsilon_{t-1}}:   
\epsilon_i =\pm 1\}.
\end{equation*}

Consider the action 
$\beta :\mathcal{W}_{2t} (ab) \rightarrow  \mathcal{W}_{2t} (ab)$
given by  $ab^{\epsilon_0}...ab^{\epsilon_{t-1}} \mapsto 
ab^{\epsilon_{t-1}}ab^{\epsilon_0}...ab^{\epsilon_{t-2}}$.

 The group  $<\beta >$ is  cyclic of order $t$.

\begin{lem}  \label{lem:orbit points}
\begin{equation*}
|\mathcal{W}_{2t} (ab)/<\beta>|=
\frac{1}{t} \sum_{j=1}^{t}  2^{\text{gcd}(j,t)}
\end{equation*}

\end{lem}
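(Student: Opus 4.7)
The proof is essentially a transport-of-structure argument using the bijection between $(ab)$-words of length $2t$ and binary words of length $t$, followed by applying the Burnside count already done in equation (\ref{eq:BurnsideBinary}).

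The plan is as follows. First, I would define the obvious bijection
\[\Phi:\mathcal{W}_{2t}(ab)\longrightarrow X_t,\qquad ab^{\epsilon_0}ab^{\epsilon_1}\cdots ab^{\epsilon_{t-1}}\longmapsto (\epsilon_0,\epsilon_1,\dots,\epsilon_{t-1}),\]
which is well-defined and bijective because an $(ab)$-word of length $2t$ is determined precisely by the $t$-tuple of signs $\epsilon_i=\pm 1$.

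Second, I would verify that $\Phi$ is equivariant with respect to the two cyclic actions, i.e., $\Phi\circ\beta=\alpha\circ\Phi$. Indeed,
\[\Phi(\beta(ab^{\epsilon_0}\cdots ab^{\epsilon_{t-1}}))=\Phi(ab^{\epsilon_{t-1}}ab^{\epsilon_0}\cdots ab^{\epsilon_{t-2}})=(\epsilon_{t-1},\epsilon_0,\dots,\epsilon_{t-2})=\alpha(\epsilon_0,\dots,\epsilon_{t-1}),\]
so $\Phi$ descends to a bijection on orbit spaces, yielding $|\mathcal{W}_{2t}(ab)/\langle\beta\rangle|=|X_t/\langle\alpha\rangle|$.

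Third, I would invoke equation (\ref{eq:BurnsideBinary}), which gives the Burnside count
\[|X_t/\langle\alpha\rangle|=\frac{1}{t}\sum_{j=1}^{t}2^{\gcd(j,t)},\]
and the result follows immediately.

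There is no real obstacle here: the two actions are literally the same action under the sign-encoding bijection, so the lemma is a corollary of the discussion preceding it. The only thing to be a bit careful about is confirming that $\Phi$ is equivariant at the level of the generators $\beta$ and $\alpha$ (rather than merely for some power), but this is transparent from the definition of $\beta$ as a single cyclic shift of the $t$ syllables $ab^{\epsilon_i}$.
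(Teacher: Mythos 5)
Your proposal is correct and is essentially the paper's own proof: the paper likewise observes that $(\epsilon_0,\dots,\epsilon_{t-1})\mapsto ab^{\epsilon_0}\cdots ab^{\epsilon_{t-1}}$ is an equivariant bijection between the $\langle\alpha\rangle$-action on $X_t$ and the $\langle\beta\rangle$-action on $\mathcal{W}_{2t}(ab)$, and then cites the Burnside count in equation (\ref{eq:BurnsideBinary}). Your only addition is writing out the equivariance check explicitly, which the paper leaves implicit.
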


\begin{proof}

Note that 
\begin{equation*}
(\epsilon_0,...,\epsilon_{t-1}) \mapsto ab^{\epsilon_0}...ab^{\epsilon_{t-1}}
\end{equation*}
 is an equivariant bijection  between the 
$<\alpha>$ action on   $X_t$  and the $<\beta>$ action on  
$\mathcal{W}_{2t} (ab)$. The result follows using (\ref{eq:BurnsideBinary}).
 
\end{proof}

\begin{thm} \label{thm:Wasymptotics}
\leavevmode
 
\begin{enumerate}
\item
$|W_{2t}|=\frac{1}{t} \displaystyle\sum_{j=1}^{t}  2^{\text{gcd}(j,t)}$ \\
\item
$|W_{2t}|\sim\dfrac{2^t}{t}$, as $t\to\infty$\\
\item
$|W_{\leq 2t}| \sim \dfrac{2^{t+1}}{t}$,
as $t\to \infty$
\end{enumerate}

\end{thm}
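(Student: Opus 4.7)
To prove Theorem~\ref{thm:Wasymptotics} the plan is to treat the three parts in order, with each leveraging the previous. Part (1) amounts to a synthesis of the preceding lemmas: Lemma~\ref{lem: conjugates}(1) shows that every conjugacy class of length $2t$ (other than those of the generators $a,b,b^{-1}$) contains an $(ab)$-word representative of length exactly $2t$, while Lemma~\ref{lem: conjugates}(2) identifies two $(ab)$-words as conjugate in $G$ exactly when they are even cyclic permutations of each other, that is, when they lie in the same $\langle\beta\rangle$-orbit. Sending an $(ab)$-word to its conjugacy class thus descends to a bijection $\mathcal{W}_{2t}(ab)/\langle\beta\rangle\to W_{2t}$, and Lemma~\ref{lem:orbit points} supplies the formula in~(1).

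For part (2), separate the $j=t$ term in the sum from (1), which contributes $2^t$, from the remaining $t-1$ terms. For $1\le j<t$, $\gcd(j,t)$ is a proper divisor of $t$ and so at most $t/2$, making each remaining term at most $2^{t/2}$. Hence
\begin{equation*}
|W_{2t}|=\frac{1}{t}\left(2^{t}+O(t\cdot 2^{t/2})\right)=\frac{2^{t}}{t}\bigl(1+o(1)\bigr),
\end{equation*}
proving (2).

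For part (3), Lemma~\ref{lem: conjugates}(1) forces every infinite-order conjugacy class to have even length, so the only conjugacy classes of odd (or zero) length are the four torsion classes $[e],[a],[b],[b^{-1}]$; hence $|W_{\le 2t}|=O(1)+\sum_{s=1}^{t}|W_{2s}|$. Using part (2), the problem reduces to the asymptotic $\sum_{s=1}^{t}2^{s}/s\sim 2^{t+1}/t$, and this is where the main obstacle lies, since the sum is not literally geometric. Reindexing by $k=t+1-s$ writes $\sum_{s=1}^{t}2^{s}/s=\frac{2^{t+1}}{t}\sum_{k=1}^{t}\frac{t}{2^{k}(t+1-k)}$, and the inner sum is then handled by splitting at a large fixed cutoff $k=J$: the head converges to $\sum_{k=1}^{J}2^{-k}$ by bounded convergence, the middle range is uniformly dominated by a geometric tail, and the terms with $k$ close to $t$ (where the factor $t/(t+1-k)$ blows up) are controlled directly by an estimate of order $t\cdot 2^{-t/2}$, which is negligible. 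This yields $\sum_{s=1}^{t}2^{s}/s\sim 2^{t+1}/t$, and combining with the $s$-dependent $o(1)$ error from part (2) via a standard $\varepsilon$-$N$ splitting of the outer sum completes the proof of (3).
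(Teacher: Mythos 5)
Your proof is correct, and for parts (1) and (2) it follows the paper's argument essentially verbatim: the identification of $W_{2t}$ with $\mathcal{W}_{2t}(ab)/\langle\beta\rangle$ via Lemma \ref{lem: conjugates} and Lemma \ref{lem:orbit points}, and then isolating the $j=t$ term and bounding the remaining $\gcd(j,t)$ by $t/2$. The only genuine divergence is the last step of part (3). The paper reduces, exactly as you do, to the asymptotic $\sum_{s=1}^{t}2^{s}/s\sim 2^{t+1}/t$ (after discarding the $O(1)$ torsion classes and the $\sum 2^{s/2}$ error), but then disposes of that sum by citing the Stolz--Ces\`aro theorem. You instead prove it by hand: reindexing by $k=t+1-s$, writing the sum as $\frac{2^{t+1}}{t}\sum_{k=1}^{t}\frac{t}{2^{k}(t+1-k)}$, and splitting into a head converging to $\sum_k 2^{-k}=1$, a geometrically dominated middle range, and a tail of size $O(t^{2}2^{-t/2})$. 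This is a correct, self-contained substitute; the trade-off is purely one of packaging, since the citation is shorter while your splitting makes explicit why the sum is asymptotically twice its final term. One small simplification available to you (and the route the paper takes): rather than an $\varepsilon$--$N$ argument to sum the $s$-dependent $o(1)$ errors from part (2), use the uniform two-sided bound $2^{s}/s\le|W_{2s}|\le 2^{s}/s+2^{s/2}$, so the accumulated error is at most $\sum_{s\le t}2^{s/2}=O(2^{t/2})=o(2^{t}/t)$ outright.
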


\begin{proof}
A proof of item (1) appears in \cite{Tra}, however for completeness we supply a proof. Suppose  $[w] \in W_{2t}$. Then in the conjugacy class of $w$  there is a representative  in  $\mathcal{W}_{2t} (ab)$.  Now the only other conjugates in  $\mathcal{W}_{2t} (ab)$ are the ones equivalent under the action of  $<\beta>$.  There is a one-to-one 
correspondence between the set of conjugacy classes $W_{2t}$ and
$\mathcal{W}_{2t} (ab)/<\beta>$, and hence by Lemma  \ref{lem:orbit points} the result follows.

To prove item (2) we begin by noting that gcd$(t,t)=t$ and gcd$(j,t)\leq \frac{t}{2}$ for $j<t$.  It follows that

\begin{equation*}
\frac{2^t}{t}\leq|W_{2t}|= \frac{1}{t}\left(\sum_{j=1}^{t-1}2^{gcd(j,t)}+2^t\right)\leq\left(\frac{t-1}{t}\right)2^{t/2}+\frac{2^t}{t}.
\end{equation*}

Since
\begin{equation*}
\frac{\left(\frac{t-1}{t}\right)2^{t/2}}{\frac{2^t}{t}}\to0
\end{equation*}
the claimed asymptotic follows.

For item (3) note that

\begin{equation*}\label{eq: W}
|W_{\leq 2t}|=3+\sum_{n=1}^{t} |W_{2n}|=3+
\sum_{n=1}^{t} 
\frac{1}{n} \sum_{j=1}^{n}2^{\text{gcd}(j,n)}.
\end{equation*}
We remark that the term $3$ appears above since there are 3 length one conjugacy classes. Using the same reasoning from item (2) we have,

\begin{equation*} \label{ }
\sum_{n=1}^{t} 
\frac{2^{n}}{n} \leq  |W_{\leq 2t}| 
\leq 3+  \sum_{n=1}^{t}  \frac{2^{n}}{n}+ 
\sum_{n=1}^{t}  2^{n/2}.
\end{equation*}  

Since
\begin{equation*}\label{ }
 \frac{\displaystyle\sum_{n=1}^{t}  2^{n/2}}
{\displaystyle\sum_{n=1}^{t}  \frac{2^{n}}{n}}
\rightarrow 0
\end{equation*}
we have that, 
$|W_{\leq 2t}| \sim \displaystyle\sum_{n=1}^{t}\frac{2^n}{n}$.
Finally, an application of the Stolz-Cesaro Theorem \cite{Mur} yields $\displaystyle\sum_{n=1}^{t}\dfrac{2^n}{n}\sim \dfrac{2^{t+1}}{t+1}$.\end{proof}

\begin{lem} \label{lem: {W}_{2t}^{np}}
\leavevmode
\begin{enumerate}
\item
$|W_{2t}^{np}|\leq\frac{1}{2}t2^{t/2}$\\
\item
$|W_{\leq2t}^{np}|\leq\frac{1}{2}t^22^{t/2}$
\end{enumerate}
\end{lem}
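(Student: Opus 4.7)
The plan is to apply Proposition \ref{prop:primitives sum} with $\mathcal{A}$ taken to be the set of all infinite-order elements of $G$, which satisfies $(*)$ by the remark following Lemma \ref{lem: conjugates}. The proposition then reduces counting $W^{np}_{2t}$ to summing the number of \emph{primitive} classes of strictly smaller length, and combined with the explicit cardinality formula from Theorem \ref{thm:Wasymptotics}(1), both parts of the lemma follow from crude bounds.

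For item (1), Proposition \ref{prop:primitives sum} applied at length $2t$ gives $|W^{np}_{2t}| = \sum_s |W^p_s|$, where $s$ ranges over the proper divisors of $2t$. Every infinite-order conjugacy class in $G$ has even length (Lemma \ref{lem: conjugates}(1), since $(ab)$-words have even length), so only even $s$ contribute; writing $s=2r$, the surviving indices are the $r$ with $r\mid t$ and $r< t$, in particular $r\leq t/2$. I would then bound $|W^p_{2r}| \leq |W_{2r}|$ together with the crude estimate
$$|W_{2r}| = \frac{1}{r}\sum_{j=1}^r 2^{\gcd(j,r)} \leq 2^r$$
coming from Theorem \ref{thm:Wasymptotics}(1); for $r \leq t/2$ this gives $|W^p_{2r}| \leq 2^{t/2}$. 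Since the divisors of $t$ that are $\leq t/2$ number at most $t/2$, summing yields $|W^{np}_{2t}| \leq \frac{t}{2}\cdot 2^{t/2}$, which is item (1).

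For item (2), I would simply sum item (1) over $n=1,\ldots,t$: each of the $t$ summands $|W^{np}_{2n}| \leq \frac{n}{2}2^{n/2}$ is bounded above by the maximum value $\frac{t}{2}2^{t/2}$, giving $|W^{np}_{\leq 2t}| \leq \frac{t^2}{2} 2^{t/2}$. There is no serious obstacle here; both bounds are deliberately loose. Their real purpose is to show that $|W^{np}_{\leq 2t}|$ grows like $O(t^2 2^{t/2})$, which is negligible next to the main term $\sim 2^{t+1}/t$ of $|W_{\leq 2t}|$ from Theorem \ref{thm:Wasymptotics}(3), so that the next section can conclude $|W^p_{\leq 2t}| \sim |W_{\leq 2t}|$, matching item (3) of Theorem \ref{thm:main}.
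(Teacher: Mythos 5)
Your proposal is correct and follows essentially the same route as the paper: apply Proposition \ref{prop:primitives sum}, bound each primitive count $|W^p_{2r}|$ by $|W_{2r}|\leq 2^r\leq 2^{t/2}$ via the Burnside formula of Theorem \ref{thm:Wasymptotics}(1), note there are at most $t/2$ admissible divisors, and obtain item (2) by summing item (1) termwise. Your extra care in re-indexing the proper divisors of $2t$ as $2r$ with $r\mid t$, $r<t$ (so that $r=1$ is retained) is a slightly more precise bookkeeping than the paper's $\sum_{s\mid t}$ over proper divisors of $t$, but it yields the identical bound.
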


\begin{proof}
For item (1), using Proposition \ref{prop:primitives sum} we have

\begin{equation*}
|W_{2t}^{np}|=\sum_{s|t} |W_{2s}^{p}|\leq\sum_{s|t} |W_{2s}|=\sum_{s|t}\frac{1}{s}\sum_{j=1}^s2^{gcd(j,s)}\leq\sum_{s|t}\frac{1}{s}\sum_{j=1}^s2^s=\sum_{s|t}2^s\leq\frac{t}{2}2^{t/2}
\end{equation*}

where the last inequality follows from the fact that the largest proper divisor of $t$ is $\frac{t}{2}$ and there are at most $\frac{t}{2}$ divisors. For item (2) we apply item (1):

\begin{equation*}
|W_{\leq2t}^{np}|=\sum_{n=1}^{t}|W_{2n}^{np}|\leq\sum_{n=1}^{t}\frac{1}{2}n2^{n/2}\leq\frac{1}{2}t2^{t/2}\sum_{n=1}^t1=\frac{1}{2}t^22^{t/2}.
\end{equation*}

\end{proof}

\begin{thm}\label{thm:Wasymptotics primitive}
\leavevmode
\begin{enumerate}
\item
$|W_{2t}^p|\sim|W_{2t}|\sim\dfrac{2^t}{t}$, as $t\to\infty$\\
\item
$|W_{\leq 2t}^p|\sim|W_{\leq2t}|\sim\dfrac{2^{t+1}}{t}$, as $t\to\infty$
\end{enumerate}

\end{thm}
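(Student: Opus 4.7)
The plan is to observe that, by definition, for each $t$ we have
$$|W_{2t}^p| = |W_{2t}| - |W_{2t}^{np}| \quad \text{and} \quad |W_{\leq 2t}^p| = |W_{\leq 2t}| - |W_{\leq 2t}^{np}|,$$
so both asymptotics reduce to showing that the nonprimitive counts are negligible compared with the full counts. Once $|W_{2t}^{np}|/|W_{2t}|\to 0$ we get $|W_{2t}^p|\sim|W_{2t}|$, and once $|W_{\leq 2t}^{np}|/|W_{\leq 2t}|\to 0$ we get $|W_{\leq 2t}^p|\sim|W_{\leq 2t}|$. The full-count asymptotics $|W_{2t}|\sim 2^t/t$ and $|W_{\leq 2t}|\sim 2^{t+1}/t$ are already in hand from Theorem \ref{thm:Wasymptotics}(2)(3), so all the work is in controlling the nonprimitive tails.

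For item (1), I would combine the bound $|W_{2t}^{np}|\leq \tfrac{1}{2}t\,2^{t/2}$ from Lemma \ref{lem: {W}_{2t}^{np}}(1) with the asymptotic $|W_{2t}|\sim 2^t/t$. Then
$$\frac{|W_{2t}^{np}|}{|W_{2t}|} \;\leq\; \frac{\tfrac{1}{2}t\,2^{t/2}}{2^t/t}\cdot(1+o(1)) \;=\; \tfrac{1}{2}t^2\,2^{-t/2}\cdot(1+o(1))\;\longrightarrow\; 0,$$
which yields $|W_{2t}^p|/|W_{2t}|\to 1$ and hence $|W_{2t}^p|\sim 2^t/t$. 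For item (2), the same strategy applies with the bound $|W_{\leq 2t}^{np}|\leq\tfrac{1}{2}t^2\,2^{t/2}$ from Lemma \ref{lem: {W}_{2t}^{np}}(2) and the asymptotic $|W_{\leq 2t}|\sim 2^{t+1}/t$. The resulting ratio is at most a constant times $t^3\,2^{-t/2}$, which again tends to $0$, giving $|W_{\leq 2t}^p|\sim 2^{t+1}/t$.

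I do not anticipate a serious obstacle here: the substantive content, namely the exponentially small upper bound on nonprimitive classes (arising because a nonprimitive class of length $2t$ is a power of a shorter primitive and so is counted by conjugacy classes of length at most $t$, producing a $2^{t/2}$ factor instead of $2^t$), has already been extracted in Lemma \ref{lem: {W}_{2t}^{np}}. What remains is simply the observation that any polynomial in $t$ times $2^{-t/2}$ vanishes in the limit, so the primitive and full counts share the same leading asymptotic.
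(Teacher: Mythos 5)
Your proposal is correct and follows essentially the same route as the paper: both reduce the claim to showing $|W_{2t}^{np}|/|W_{2t}|\to 0$ (resp.\ $|W_{\leq 2t}^{np}|/|W_{\leq 2t}|\to 0$) using the bounds of Lemma \ref{lem: {W}_{2t}^{np}} against the lower bound $|W_{2t}|\geq 2^t/t$ (resp.\ $|W_{\leq 2t}|\geq 2^t/t$). The only cosmetic difference is that the paper uses the explicit inequality $|W_{2t}|\geq 2^t/t$ where you invoke the asymptotic with a $(1+o(1))$ factor; the conclusion is identical.
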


\begin{proof}
Applying Lemma \ref{lem: {W}_{2t}^{np}}, we have

\begin{equation*}
|W_{2t}|-\frac{1}{2}t2^{t/2}\leq|W_{2t}^p|\leq|W_{2t}|.
\end{equation*}

Dividing by $|W_{2t}|$ and noting that $|W_{2t}|\geq\frac{2^t}{t}$, we get

\begin{equation*}
1-\frac{\frac{1}{2}t2^{t/2}}{\frac{2^t}{t}}\leq\frac{|W_{2t}^p|}{|W_{2t}|}\leq 1,
\end{equation*}
which yields the desired asymptotic.

For part (2), we use an analogous argument. We have from Lemma \ref{lem: {W}_{2t}^{np}},

\begin{equation*}
|W_{\leq2t}|-\frac{1}{2}t^22^{t/2}\leq|W_{\leq2t}^p|\leq|W_{\leq2t}|.
\end{equation*}

Dividing by $|W_{\leq 2t}|$ and using that $|W_{\leq2t}|\geq\sum_{n=1}^t\frac{2^n}{n}\geq\frac{2^t}{t}$, we have
\begin{equation*}
1-\frac{\frac{1}{2}t^22^{t/2}}{\frac{2^t}{t}}\leq\frac{|W_{\leq2t}^p|}{|W_{\leq2t}|}\leq 1,
\end{equation*}

giving the desired asymptotic.

\end{proof}


\section{Counting conjugacy classes of reciprocal words}
\label{sec: counting reciprocal words}

In this section we  compute the growth rate of conjugacy classes of reciprocal words in $G=\mathbb{Z}_2\ast \mathbb{Z}_3$.  The conjugacy classes of reciprocal words have length a multiple of $4$. For this reason we use the parameter $4t$ for  ease of  computation.   We have,

\begin{lem}\label{lem:reciprocal conj sizes}
\leavevmode  
\begin{enumerate}
\item
$|R_{4t}|=2^{t-1}$
\item
$|R_{\leq 4t}|=2^t-1$
\end{enumerate}
\end{lem}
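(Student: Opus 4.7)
The plan is to count conjugacy classes of reciprocal words of length $4t$ by first counting their normal-form representatives and then dividing by the known multiplicity coming from Lemma~\ref{lem: normal form}.

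For item (1), I would invoke the bijection $\mathcal{N}_{4t} \leftrightarrow Y_{2t}$ set up at the end of Section \ref{countingbinarywords}. An element of $Y_{2t}$ is a tuple $(\epsilon_0,\dots,\epsilon_{2t-1})$ subject to the anti-palindromic constraint $\epsilon_j = -\epsilon_{2t-j-1}$, so the first half $(\epsilon_0,\dots,\epsilon_{t-1})$ may be chosen freely in $\{\pm 1\}^t$ and the second half is then forced. This gives $|\mathcal{N}_{4t}| = |Y_{2t}| = 2^t$. Lemma \ref{lem: normal form} tells us every conjugacy class in $\mathcal{R}$ meets $\mathcal{N}$ in exactly two elements, and since conjugation preserves word length these two representatives both lie in $\mathcal{N}_{4t}$. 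Hence
\begin{equation*}
|R_{4t}| \;=\; \tfrac{1}{2}|\mathcal{N}_{4t}| \;=\; 2^{t-1}.
\end{equation*}

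For item (2), recall that every conjugacy class of a reciprocal word has length a multiple of $4$ (since reciprocal words in normal form $[a,\gamma]$ have length $4t$ for some $t\geq 1$). Therefore
\begin{equation*}
|R_{\leq 4t}| \;=\; \sum_{n=1}^{t} |R_{4n}| \;=\; \sum_{n=1}^{t} 2^{n-1} \;=\; 2^{t}-1.
\end{equation*}

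There is no real obstacle here: the entire argument is packaged into the two preparatory ingredients, namely the explicit bijection $\mathcal{N}_{4t} \leftrightarrow Y_{2t}$ and the ``exactly two conjugate representatives in normal form'' result (Lemma \ref{lem: normal form}). The heavy lifting was already done in Section \ref{countingbinarywords}, so this lemma is essentially a bookkeeping step that harvests those results.
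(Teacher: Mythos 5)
Your proof is correct and follows essentially the same route as the paper: count the $2^t$ normal-form representatives (the paper writes them directly as $ab^{\epsilon_0}\dots ab^{\epsilon_{t-1}}ab^{-\epsilon_{t-1}}\dots ab^{-\epsilon_0}$, you count them via the bijection with $Y_{2t}$), divide by two using Lemma \ref{lem: normal form}, and sum the geometric series for item (2).
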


\begin{proof}
 Let $[w] \in R_{4t}$. We pick as representative a cyclically reduced word of the form 
$$w=ab^{\epsilon_0}...ab^{\epsilon_{t-1}}ab^{-\epsilon_{t-1}}...ab^{-\epsilon_0}$$ where   
$\epsilon_i =\pm 1$. The result follows since there are exactly two conjugates of this form (Lemma \ref{lem: normal form}) and there are $2^{t}$ words of this form.

For the second claim,

\begin{equation*}
|R_{\leq 4t}|=\sum_{n=1}^t|R_{4n}|=\frac{1}{2}\sum_{n=1}^t2^n=\frac{1}{2}\left[2\left(\frac{1-2^t}{1-2}\right)\right]=2^t-1.
\end{equation*} 
\end{proof}

\begin{lem} \label{lem: {R}_{4t}^{np}}
\leavevmode
\begin{enumerate}
\item
$|{R}_{4t}^{np}| \leq \frac{1}{4} t2^{t/2}$ \\
\item
$|{R}_{\leq 4t}^{np}| \leq \frac{1}{4} t^2 2^{t/2}$
\end{enumerate}
\end{lem}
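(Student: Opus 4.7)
The strategy mirrors the proof of Lemma \ref{lem: {W}_{2t}^{np}} very closely. The idea is to apply Proposition \ref{prop:primitives sum} to bound nonprimitive conjugacy classes by a sum of primitive ones (which in turn are dominated by the full count of Lemma \ref{lem:reciprocal conj sizes}), and then use the crude combinatorial bound that a positive integer has at most $t/2$ proper divisors, each of size at most $t/2$.

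First I would apply Proposition \ref{prop:primitives sum} with $\mathcal{A}=\mathcal{R}$ at length $4t$. A key observation is that every conjugacy class in $R$ has length a multiple of $4$, so among the proper divisors $s$ of $4t$ only those with $4\mid s$ give nonzero contribution. Reindexing $s=4k$ with $k$ a proper divisor of $t$, Proposition \ref{prop:primitives sum} yields
\begin{equation*}
|R_{4t}^{np}| \;=\; \sum_{\substack{k\mid t\\ k<t}} |R_{4k}^{p}| \;\leq\; \sum_{\substack{k\mid t\\ k<t}} |R_{4k}| \;=\; \sum_{\substack{k\mid t\\ k<t}} 2^{k-1},
\end{equation*}
where the middle inequality is trivial and the last equality is item (1) of Lemma \ref{lem:reciprocal conj sizes}. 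Now the largest proper divisor of $t$ is at most $t/2$, and the number of proper divisors of $t$ is at most $t/2$. Therefore
\begin{equation*}
\sum_{\substack{k\mid t\\ k<t}} 2^{k-1} \;\leq\; \tfrac{t}{2}\cdot 2^{t/2-1} \;=\; \tfrac{1}{4}\,t\,2^{t/2},
\end{equation*}
which establishes item (1).

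For item (2) I would simply sum item (1) over $n=1,\dots,t$ and use the monotonicity of $n\mapsto n2^{n/2}$:
\begin{equation*}
|R_{\leq 4t}^{np}| \;=\; \sum_{n=1}^{t} |R_{4n}^{np}| \;\leq\; \sum_{n=1}^{t} \tfrac{1}{4}\,n\,2^{n/2} \;\leq\; \tfrac{1}{4}\,t\,2^{t/2}\sum_{n=1}^{t} 1 \;=\; \tfrac{1}{4}\,t^{2}\,2^{t/2}.
\end{equation*}

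There is no real obstacle here; the argument is essentially a verbatim adaptation of Lemma \ref{lem: {W}_{2t}^{np}} with $|W_{2s}| = \tfrac{1}{s}\sum_{j=1}^{s}2^{\gcd(j,s)}$ replaced by $|R_{4k}| = 2^{k-1}$. The only mild subtlety is remembering that reciprocal conjugacy classes exist only in lengths divisible by $4$, so that when invoking Proposition \ref{prop:primitives sum} at length $4t$ the relevant proper divisors are precisely $4k$ for $k$ a proper divisor of $t$, rather than arbitrary proper divisors of $4t$.
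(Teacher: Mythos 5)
Your proof is correct and is precisely the argument the paper intends: the paper itself omits the details, saying only that the lemma "follows in an analogous way to the proof of Lemma \ref{lem: {W}_{2t}^{np}}," and you have carried out exactly that adaptation, applying Proposition \ref{prop:primitives sum} at length $4t$, restricting to divisors divisible by $4$, substituting $|R_{4k}|=2^{k-1}$ from Lemma \ref{lem:reciprocal conj sizes}, and using the crude divisor count. The only quibble is terminological: the paper's "proper divisor" excludes $1$, whereas your index set $\{k\mid t,\ k<t\}$ (correctly) includes $k=1$; this does not affect the bound since all such $k$ are at most $t/2$ and there are at most $t/2$ of them.
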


The proof of the lemma follows in an analagous way to the proof of Lemma \ref{lem: {W}_{2t}^{np}}.  We leave the details to the reader.

\begin{thm}\label{thm:growth prim conjugacy rec words 2}
 \leavevmode
\begin{enumerate}
  \item  $ |R^{p}_{4t}| \sim |R_{4t}| = 2^{t-1}, \text{  as  } t \rightarrow \infty$\\

 \item $ |R^{p}_{\leq 4t}| \sim |R_{\leq 4t}| = 2^t-1, \text{  as  } t \rightarrow \infty$
\end{enumerate}

 \end{thm}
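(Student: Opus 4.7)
The plan is to follow the same sandwiching strategy used in the proof of Theorem \ref{thm:Wasymptotics primitive}. Since $R_{4t}^{p}$ and $R_{4t}^{np}$ partition $R_{4t}$, we automatically have
\[
|R_{4t}| - |R_{4t}^{np}| \;\leq\; |R_{4t}^{p}| \;\leq\; |R_{4t}|,
\]
and similarly for the $\leq 4t$ versions. So the only work is to show that the nonprimitive bound from Lemma \ref{lem: {R}_{4t}^{np}} is negligible compared to the total count from Lemma \ref{lem:reciprocal conj sizes}.

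For part (1), I would substitute the exact value $|R_{4t}| = 2^{t-1}$ and the bound $|R_{4t}^{np}| \leq \tfrac{1}{4}t\,2^{t/2}$ into the sandwich, then divide through by $|R_{4t}|$ to obtain
\[
1 - \frac{\tfrac{1}{4}t\,2^{t/2}}{2^{t-1}} \;\leq\; \frac{|R_{4t}^{p}|}{|R_{4t}|} \;\leq\; 1.
\]
The left-hand error term simplifies to $t/2^{t/2+1}$, which tends to $0$ as $t \to \infty$, so $|R_{4t}^{p}|/|R_{4t}| \to 1$, giving the claimed asymptotic.

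For part (2), the argument is identical in form. Using $|R_{\leq 4t}| = 2^{t} - 1$ and $|R_{\leq 4t}^{np}| \leq \tfrac{1}{4}t^{2}\,2^{t/2}$, dividing through yields
\[
1 - \frac{\tfrac{1}{4}t^{2}\,2^{t/2}}{2^{t} - 1} \;\leq\; \frac{|R_{\leq 4t}^{p}|}{|R_{\leq 4t}|} \;\leq\; 1,
\]
and the error term $\tfrac{t^{2}}{4}\cdot 2^{t/2}/(2^{t}-1)$ again tends to $0$, giving the desired asymptotic.

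I do not anticipate a real obstacle here: unlike the $W$ case, where the dominant term $2^{t}/t$ has a logarithmic denominator that must be tracked through a Stolz--Ces\`aro step, both $|R_{4t}|$ and $|R_{\leq 4t}|$ are of pure exponential order $2^{t}$, while the nonprimitive contribution is at most polynomial in $t$ times $2^{t/2}$. The exponential gap between $2^{t}$ and $2^{t/2}$ swamps any polynomial factor, so the ratio of error to total tends to zero with plenty of room to spare.
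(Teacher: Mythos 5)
Your proposal is correct and is essentially the same argument as the paper's: the paper sandwiches $|R^{p}_{4t}|$ between $|R_{4t}|-\tfrac{1}{4}t2^{t/2}$ and $|R_{4t}|$ using Lemma \ref{lem: {R}_{4t}^{np}}, divides by $|R_{4t}|=\tfrac{1}{2}2^{t}$ (resp.\ $|R_{\leq 4t}|=2^{t}-1$), and lets $t\to\infty$, exactly as you do. Your remark that the exponential gap between $2^{t}$ and $2^{t/2}$ makes this easier than the $W$ case is accurate.
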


\begin{proof} For part (1), we apply Lemma \ref{lem: {R}_{4t}^{np}} to get

\[|{R}_{4t}| - \frac{1}{4} t2^{t/2} \leq |{R}_{4t}^{p}| \leq |{R}_{4t}|.\]

 Dividing by $|R_{4t}|=\frac{1}{2}2^t$ and letting $t\to\infty$ yields the claimed asymptotic.  For part (2), we again apply Lemma \ref{lem: {R}_{4t}^{np}} to get

\[|{R}_{\leq 4t}| - \frac{1}{4}t^2 2^{t/2} \leq |{R}_{\leq 4t}^{p}| \leq |{R}_{\leq 4t}|.\]

Dividing by $|R_{\leq 4t}|=2^t-1$ and letting $t\to\infty$ yields the claimed asymptotic.

\end{proof}


\section{Lying Low} \label{sec:lying low}

In this section we would like to count the low lying geodesics.  In other words, we consider the growth rate of conjugacy classes of low lying words as well as low lying reciprocal words. For  a positive integer $m$  we say that a word in $\mathcal{W}$ is an {\it m-low lying word}  if when conjugated to an $(ab)$-word, 
$ab^{\epsilon_{1}} ab^{\epsilon_{2}}\dots ab^{\epsilon_{t}}$,  $\epsilon_{i}=\pm 1$,  no  $(m+1)$   consecutive $\epsilon_{i}$ considered cyclically have the same sign. Put another way, the highest exponent  of $ab$ or $ab^{-1}$ considered cyclically in an  $m$-low lying word is at most
$m$.  When the $m$ is understood we simply say that the word is {\it low lying}. 
We denote the conjugacy classes  of  $m$-low lying words of  word length $2t$ as, $L_{2t,m}$, and the $m$-low lying primitive conjugacy classes of words as 
$L^{p}_{2t,m}$.
We note that the property of being $m$-low lying is preserved under conjugation, taking powers, and taking roots, and hence satisfies  condition  $(*)$.

\subsection{Low lying words}

We  fix $m\geq2$  a positive integer. We now consider the growth rate  of the conjugacy classes of all $m$-low lying words. 
Let $\mathcal{L}_{2t,m}(ab)$ be the set of normalized $m$-low lying words of length $2t$. That is, a word of the form,
$w=ab^{\epsilon_{1}} ab^{\epsilon_{2}}\dots ab^{\epsilon_{t}}$, where $\epsilon_{i}=\pm 1$,  and no  
$(m+1)$   consecutive $\epsilon_{i}$ considered cyclically have the same sign.

As before we identify such a word $w$ with the $t$-tuple of $\pm 1's$,  $(\epsilon_1,...,\epsilon_t)$, and of course via this identification we have the notion of a primitive and nonprimitive  $t$-tuple.  The cyclic action on the word $w$ induces  a cyclic action on this $t$-tuple.  With this in mind,  we consider the $t$-tuple of  $\pm 1's$ on a  circle oriented counterclockwise. Within the cyclic equivalence class we identify a distinguished element.  Namely, using the lexicographical ordering (-1 preceeds 1) among cyclic permutations of   $(\epsilon_1,...,\epsilon_t)$  choose the smallest and  call such a word an  {\it $m$-Lyndon binary word}.  An $m$-Lyndon binary word is primitive if and only if none of its  non-trivial cyclic permutations  are equal to it. 
Note that an $m$-Lyndon binary word selects a representative in the conjugacy class of an  $m$-low lying word.  For example, the Lyndon binary word  
  $(-1,-1,-1,1, -1,-1,-1,1, -1,-1,1,1)$ selects the  low lying word 
  $[w]=[(ab^{-1})^{3}(ab)(ab^{-1})^{3}(ab)(ab^{-1})^{2}(ab)^{2}]
  \in L_{24}$.  We have established the following.

\begin{prop} Fix  $m \geq 2$ a positive integer.  $L_{2t,m}$ is bijectively equivalent to the $m$-Lyndon binary words of length $t$.
\end{prop}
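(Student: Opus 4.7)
The plan is to exhibit a direct bijection and verify it respects the combinatorial structure on both sides. Let me lay out the steps.

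First, I would unpack the two sides explicitly. On the geometric side, an element of $L_{2t,m}$ is a conjugacy class $[w]$ of an $m$-low lying word of length $2t$; by Lemma \ref{lem: conjugates}(1), such a class has a representative $w=ab^{\epsilon_1}ab^{\epsilon_2}\cdots ab^{\epsilon_t}$ in $(ab)$-form, and the $m$-low lying hypothesis is precisely that no $m+1$ consecutive $\epsilon_i$'s (read cyclically) share a sign. On the combinatorial side, an $m$-Lyndon binary word of length $t$ is a $t$-tuple $(\epsilon_1,\dots,\epsilon_t)\in\{\pm 1\}^t$ satisfying the same cyclic no-$(m+1)$-consecutive-same-sign condition, and which is moreover the lexicographically smallest among its $t$ cyclic permutations.

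Next, I would define the candidate bijection. Given $[w]\in L_{2t,m}$, pick the $(ab)$-representative $w$ and form the tuple $(\epsilon_1,\dots,\epsilon_t)$. The map sends $[w]$ to the lexicographically smallest cyclic permutation of this tuple. To show this is well-defined, I must check that different $(ab)$-representatives of the same conjugacy class give the same Lyndon representative: by Lemma \ref{lem: conjugates}(2), any two $(ab)$-form conjugates of $w$ differ by an even cyclic permutation of $ab^{\epsilon_1}\cdots ab^{\epsilon_t}$, which under the identification $ab^{\epsilon_i}\leftrightarrow\epsilon_i$ corresponds exactly to a cyclic permutation of the $t$-tuple; hence they share the same lex-minimal cyclic permutation. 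The $m$-low lying condition is a cyclic invariant of the tuple, so the image indeed lies in the $m$-Lyndon words.

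For the inverse, given an $m$-Lyndon binary word $(\epsilon_1,\dots,\epsilon_t)$, I would form $w=ab^{\epsilon_1}\cdots ab^{\epsilon_t}$, an $(ab)$-word of length $2t$, and take its conjugacy class $[w]$. This is automatically $m$-low lying by construction, and the word length of $[w]$ is $2t$ because $(ab)$-words are cyclically reduced and hence realize the minimal length in their conjugacy class. Checking that the two maps are mutually inverse is immediate from the identification: starting from a Lyndon tuple, forming the word, and taking the lex-minimal cyclic permutation of the $(ab)$-form tuple returns the same Lyndon tuple (since it was already minimal); conversely, starting from a class, applying the map, then forming the corresponding $(ab)$-word gives back a conjugate of the original.

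The only mildly subtle point, and the place I would be most careful, is confirming that the tuple associated to a given conjugacy class is uniquely determined up to cyclic permutation, i.e.\ that no two genuinely different cyclic equivalence classes of tuples can represent the same conjugacy class in $G$. This is exactly Lemma \ref{lem: conjugates}(2): the only $(ab)$-conjugates of $ab^{\epsilon_1}\cdots ab^{\epsilon_t}$ are its cyclic permutations. Once this is in hand, the lexicographic selection of a canonical representative makes the bijection with $m$-Lyndon words transparent, and the proposition follows.
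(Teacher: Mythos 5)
Your proposal is correct and follows essentially the same route as the paper: identify $(ab)$-representatives of $m$-low lying conjugacy classes with binary $t$-tuples, use Lemma \ref{lem: conjugates}(2) to see that conjugacy corresponds exactly to cyclic permutation of the tuple, and select the lexicographically minimal cyclic permutation as the canonical ($m$-Lyndon) representative. The paper leaves the well-definedness and mutual-inverse checks implicit, while you spell them out, but the underlying bijection is identical.
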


\begin{rem}
 In the case that $m\geq t$ and we  restrict to primitive  
binary words,  such words are known  as  {\it Lyndon words}  in the literature.  
\end{rem}

Our next goal is to derive an effective lower bound on the 
number of $m$-Lyndon words  or equivalently the $m$-low lying conjugacy classes of length $2t$. We consider the normal form  of the $m$-low lying words,
$ab^{\epsilon_1}ab^{\epsilon_2}...ab^{\epsilon_t}$ or equivalently $(\epsilon_1,...,\epsilon_t)$ with  cyclic runs of length at most $m$. In order to achieve a lower bound we construct a subset  of $m$-low lying words of length $2t$ or equivalently  $m$-low lying  $t$-tuples. To that end,
we picture $t$ ordered slots and we group them into the first $m$, second $m$, and so on. There are exactly 
$\lceil \frac{t}{m}\rceil$ 
groups where the last grouping has less than or equal to $m$ slots. We color all  these slots   black except the following which are colored red:  the first one 
in the second group of $m$ slots, the first one in the third group of $m$ slots, and so on. If the $t^{th}$ slot (that is, the last slot) is not red it should also be colored red. Let $\mathcal{B}_{2t,m}$ be the subset of $m$-low lying words where any  black slot can be +1 or -1, and    the red slots are determined to  insure  there are no runs of length greater than $m$.  Hence there are at least
$t-\lfloor \frac{t}{m}\rfloor -1$ black slots,  and since the worst case up to cyclic conjugacy  is that all $t$ of the cyclic conjugates are distinct and in $\mathcal{B}_{2t,m}$ we have that the conjugacy classes of these elements  satisfy,
$|B_{2t,m}| \geq \frac{2^{t-\lfloor \frac{t}{m}\rfloor -1}}{t}$. In fact, we have the following.

\begin{thm}
\label{thm:lowlyingbd}
\leavevmode
\begin{enumerate}

\item  $|L_{2t,m}| \geq |B_{2t,m}| \geq \frac{2^{t-\frac{t}{m} -1}}{t}$, for $m\geq 2$.\\
\item For $m\geq3$, $|L_{2t,m}^p|\sim|L_{2t,m}|$, as $t\to\infty$.
\item There exists $t_0 >0$ so that $|L^{p}_{2t,m}|  
\geq \frac{1}{2} \left(\frac{2^{t-\frac{t}{m} -1}}{t}\right)$, for $t \geq t_0$ and $m\geq 3$.
\item  
There exists  $t_0 >0$ so that $|L^{p}_{\leq 2t,m}|  
\geq \frac{1}{4} \displaystyle\sum_{s=t_0}^{t} \frac{2^{s-\frac{s}{m}}}{s}$, for $t \geq t_0$ and $m\geq3$.
\end{enumerate}
\end{thm}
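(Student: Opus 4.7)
The plan is to prove parts (1)--(4) in order, with part (2) being the technical heart and parts (3)--(4) falling out as immediate corollaries. For part (1), the paragraph immediately preceding the theorem already lays out the construction of $\mathcal{B}_{2t,m}$. My plan here is simply to verify two items carefully: (i) every assignment of $\pm 1$ to the black slots, combined with the forced red slots, produces a valid $m$-low lying word, since a red slot is planted at the start of every length-$m$ block and also in the last slot if needed, guaranteeing no cyclic run of length $m+1$; and (ii) the number of black slots is at least $t-\lceil t/m\rceil\geq t-t/m-1$. Since each conjugacy class has at most $t$ cyclic representatives, pigeonhole gives $|B_{2t,m}|\geq 2^{t-t/m-1}/t$, and $|L_{2t,m}|\geq |B_{2t,m}|$ is a trivial inclusion.

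For part (2), the plan is to show that the non-primitive contribution is negligible. Applying Proposition \ref{prop:primitives sum} to $\mathcal{L}_m$ (which satisfies condition $(*)$, as remarked at the start of the section),
\[|L_{2t,m}^{np}|=\sum_{s\mid t,\ s<t}|L_{2s,m}^{p}|\leq\sum_{s\mid t,\ s<t}|L_{2s,m}|\leq\sum_{s\mid t,\ s<t}|W_{2s}|.\]
Using the estimate $|W_{2s}|\leq 2^s/s+s\cdot 2^{s/2}$ extracted from the proof of Theorem \ref{thm:Wasymptotics}, together with the facts that every proper divisor of $t$ is at most $t/2$ and there are at most $t/2$ of them, I obtain a bound of the form $|L_{2t,m}^{np}|\leq C\,t\cdot 2^{t/2}$ for an absolute constant $C$. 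Comparing against the lower bound from part (1),
\[\frac{|L_{2t,m}^{np}|}{|L_{2t,m}|}\leq C'\,t^{2}\cdot 2^{-t\left(\tfrac{1}{2}-\tfrac{1}{m}\right)},\]
which tends to zero precisely when $\tfrac{1}{2}-\tfrac{1}{m}>0$, i.e., when $m\geq 3$. Thus $|L_{2t,m}^{p}|=|L_{2t,m}|-|L_{2t,m}^{np}|\sim|L_{2t,m}|$.

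Part (3) then follows immediately: the asymptotic in part (2) provides a threshold $t_0$ with $|L_{2t,m}^{p}|\geq\tfrac{1}{2}|L_{2t,m}|$ for all $t\geq t_0$, and combining with part (1) yields the stated bound. Part (4) is obtained by summing part (3) from $t_{0}$ to $t$:
\[|L_{\leq 2t,m}^{p}|\geq\sum_{s=t_{0}}^{t}|L_{2s,m}^{p}|\geq\frac{1}{2}\sum_{s=t_{0}}^{t}\frac{2^{s-s/m-1}}{s}=\frac{1}{4}\sum_{s=t_{0}}^{t}\frac{2^{s-s/m}}{s}.\]

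The main obstacle is the bookkeeping in part (2): the primitive lower bound from (1) grows like $2^{t(1-1/m)}/t$, while the crude estimate on the non-primitives grows like $t\cdot 2^{t/2}$. These compete only when $1-\tfrac{1}{m}>\tfrac{1}{2}$, which forces $m\geq 3$ and is exactly why this hypothesis is invoked in parts (2)--(4); the borderline case $m=2$ genuinely fails the comparison and would require a separate treatment. Everything else reduces to routine substitution, making sure the inclusion $|L_{2s,m}|\leq|W_{2s}|$ is used correctly and that the divisor-sum constants do not accidentally blow up.
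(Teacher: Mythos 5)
Your proposal is correct and follows essentially the same route as the paper: part (1) via the block/red-slot construction, part (2) by comparing a crude $O(t\,2^{t/2})$ bound on the non-primitive classes against the lower bound $2^{t-t/m-1}/t$, and parts (3)--(4) as immediate corollaries; the only cosmetic difference is that you re-derive the non-primitive bound through Proposition \ref{prop:primitives sum} and divisor sums, whereas the paper simply invokes the containment $|L^{np}_{2t,m}|\leq|W^{np}_{2t}|$ together with Lemma \ref{lem: {W}_{2t}^{np}}. One point to tighten: the statement (and the remark following it in the paper) requires a single $t_0$ valid for \emph{all} $m\geq 3$, while your ratio $C't^{2}2^{-t(1/2-1/m)}$ only yields a threshold for each fixed $m$; this is repaired in one line by observing that $\tfrac{1}{2}-\tfrac{1}{m}\geq\tfrac{1}{6}$ for $m\geq 3$, so the error ratio is bounded by $C't^{2}2^{-t/6}$ uniformly in $m$, which is exactly how the paper arranges an $m$-independent $t_0$.
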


\begin{proof}  Item (1) was proven in the discussion before the theorem.   To prove  item (2),  we first use  Lemma  
\ref{lem: {W}_{2t}^{np}} to bound the nonprimitive low lying growth rate,
\begin{displaymath}
|L^{np}_{2t,m}| \leq  |W_{2t}^{np}|\leq\frac{1}{2}t2^{t/2}\ 
\end{displaymath}
hence,

\begin{equation}\label{eq: lower bound inequality}
  1 \geq  \frac{|L^{p}_{2t,m}|}{|L_{2t,m}|}  
  =  1-\frac{|L^{np}_{2t,m}|}{|L_{2t,m}|}  
  \geq   1-\frac{\frac{1}{2}t2^{t/2}}{ \frac{2^{t-\frac{t}{m}-1}}{t}}
  \geq    1-\frac{\frac{1}{2}t2^{t/2}}{ \frac{2^{t-\frac{t}{3} -1}}{t}}
\end{equation}
where we have used $m \geq 3$  in the right-hand inequality of expression (\ref{eq: lower bound inequality}). 

Noting that the lower bound in expression (\ref{eq: lower bound inequality}) does not depend on $m$,  item (3) follows from item (1) and  by choosing  $t_0$ large enough so that 
\begin{displaymath}
\frac{|L^{p}_{2t,m}|}{|L_{2t,m}|} \geq  \frac{1}{2}.
\end{displaymath}
 . 

Finally to prove  item (4),

\begin{eqnarray*}
|L^{p}_{\leq 2t,m}|=\sum_{s=1}^{t} |L^{p}_{2s,m}|
=\sum_{s=1}^{t_0} |L^{p}_{2s,m}| + \sum_{s=t_0}^{t} |L^{p}_{2s,m}| 
&\geq&  \sum_{s=1}^{t_0} |L^{p}_{2s,m}|+
 \frac{1}{2} \sum_{s=t_0}^{t}
 \left(\frac{2^{s-\frac{s}{m} -1}}{s}\right)\\
 &\geq&  \frac{1}{2} \sum_{s=t_0}^{t}
 \left(\frac{2^{s-\frac{s}{m} -1}}{s}\right)\\
 &=&\frac{1}{4} \sum_{s=t_0}^{t} \frac{2^{s-\frac{s}{m}}}{s}.
  \end{eqnarray*}

\end{proof}

\begin{rem}
Since our eventual goal is to prove Theorem \ref{thm:main} and Corollary \ref{cor:main}  it is critical that $t_0$ in the above theorem does not depend on $m$.
\end{rem}

\subsection{Low lying reciprocal words}

In this section we count the low lying reciprocal words.  Recall that \[\mathcal{N}_{4t}=\{[a,\gamma]: \gamma \,\textrm{a $(bb)$-word of length} \,2t-1\}\] where $[a,\gamma]$ is the group commutator of $a$ and $\gamma$.
We define $\pi : \mathcal{N}_{4t} \rightarrow  R_{4t}$, to be the map taking elements in $\mathcal{N}_{4t}$ to its conjugacy class.  From Section \ref{countingbinarywords} we know  that there are exactly two conjugacy class representatives in normal form for reciprocal words, hence $\pi$ is a surjective 2-1 mapping. 
 
\begin{definition} Fix an integer, $t>0$. A  composition of $t$ is  an ordered sequence 
of positive integers, $(k_1,...,k_l)$ which   sums  to $t$.  The ${k_i}'s$ are called the parts of the composition. The set of all compositions of $t$ is denoted $C_{t}$. Compositions of $t$ having parts bounded by a fixed positive integer $m$ are denoted,  $C_{t,m}$. 
\end{definition}

Next, define $g: \mathcal{N}_{4t}\to C_t$, where $g([a,b^{\epsilon_1}ab^{\epsilon_2}\dots ab^{\epsilon_t}])=(k_1,\dots,k_l)$.  Here $(k_1,\dots,k_l)$ is the ordered sequence of  lengths of (+1) and (-1)-runs starting from the left in the $\epsilon_i$'s.  For example, if $\omega=[a,b^{-1}ab^{-1}ab^{-1}ab^{1}ab^{1}ab^{-1}ab^{1}]\in \mathcal{N}_{28}$, then $g(\omega)=(3,2,1,1)\in C_7$.  We remark here that $g$ is a surjective 2-1 mapping.  Namely, suppose $\omega=[a,b^{\epsilon_1}ab^{\epsilon_2}\dots ab^{\epsilon_t}] \in \mathcal{N}_{4t}$ so that $g(\omega)=(k_1,\dots,k_l)\in C_t$. Then there exists exactly one other element in $\mathcal{N}_{4t}$ whose image under $g$ is $(k_1,\dots,k_l)$, namely, $[a,b^{-\epsilon_1}ab^{-\epsilon_2}\dots ab^{-\epsilon_t}]$.




\label{fig:lowlyingreciprocal}
\begin{minipage}{.35\linewidth}
\[\begin{tikzcd}[column sep=large, row sep=large]
\mathcal{N}_{4t}\arrow[r, "g"] \arrow[d, "\pi"']
& C_t\\
R_{4t} &\\
\end{tikzcd}
\]
\end{minipage}
\hspace{1cm}
\begin{minipage}{.35\linewidth}

\[\begin{tikzcd}[column sep=large, row sep=large]
\mathcal{L}_{4t,m}\cap\mathcal{N}_{4t}\arrow[r, "g_m"] \arrow[d, "\pi_m"']
& C_{t,m}\\
L_{4t,m}\cap R_{4t}  \arrow[ur, dashed, "\Phi_m"'] &\\
\end{tikzcd}
\]
\end{minipage}

The set of reciprocal words is filtered by low lying reciprocal words.  This is because the set of all words $\mathcal{W}$ has filtration,

\begin{equation*}\mathcal{L}_{4t,1}\subset \mathcal{L}_{4t,2}\subset\dots \subset\mathcal{L}_{4t,m}\subset\dots
\end{equation*}

This induces a filtration of each of the spaces in the left diagram to yield restricted mappings in the right diagram.  Noting that the maps $g_m$ and $\pi_m$ are surjective 2-1 maps, even though the preimage of a point in $L_{4t,m}\cap R_{4t}$ is generically not the same as the preimage of a point in $C_{t,m}$, we have, $2|L_{4t,m}\cap R_{4t}|=|\mathcal{L}_{4t,m}\cap\mathcal{N}_{4t}|=2|C_{t,m}|.$  Thus we have,

\begin{thm}
For each positive integer $m\geq2$, there is a bijection $\Phi_m$ from $L_{4t,m}\cap R_{4t}$ to $C_{t,m}$, given by the right diagram.
\label{bijection}
\end{thm}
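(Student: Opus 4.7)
The plan is to verify that both the vertical map $\pi_m$ and the horizontal map $g_m$ in the right diagram are well-defined surjective $2$-to-$1$ maps, from which the cardinality equality $2|L_{4t,m}\cap R_{4t}|=|\mathcal{L}_{4t,m}\cap\mathcal{N}_{4t}|=2|C_{t,m}|$ follows, and then to extract an explicit bijection $\Phi_m$ from this data.

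For $g_m$: writing $\gamma$'s $\epsilon$-sequence as $(\epsilon_1,\ldots,\epsilon_t)$ with run-length composition $(k_1,\ldots,k_l)$, I observe that the reciprocal's full cyclic $\epsilon$-sequence of length $2t$ is $(\epsilon_1,\ldots,\epsilon_t,-\epsilon_t,\ldots,-\epsilon_1)$; the forced sign flips at the midpoint (from $\epsilon_t$ to $-\epsilon_t$) and at the wrap-around (from $-\epsilon_1$ back to $\epsilon_1$) guarantee that its cyclic run-length composition is the palindrome $(k_1,\ldots,k_l,k_l,\ldots,k_1)$. Hence the $m$-low-lying condition is equivalent to $k_i\leq m$ for all $i$, which is exactly $g_m([a,\gamma])\in C_{t,m}$. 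Surjectivity follows by building $\gamma$ from any given composition and any choice of starting sign, and the resulting $2$-to-$1$ fiber is $\{[a,\gamma],[a,\bar\gamma]\}$ where $\bar\gamma$ negates the $\epsilon_i$'s.

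For $\pi_m$: since the low-lying property is invariant under conjugation, $\pi_m$ lands in $L_{4t,m}\cap R_{4t}$, and both surjectivity and the exactly-$2$-to-$1$ fiber structure follow directly from Lemma \ref{lem: normal form} (every reciprocal conjugacy class has exactly two normal-form representatives, and both are automatically low-lying when one is). Combining these two counts yields $|L_{4t,m}\cap R_{4t}|=|C_{t,m}|$.

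To construct $\Phi_m$ concretely from the diagram, I view the $\pi_m$- and $g_m$-pairings as two perfect matchings on the vertex set $\mathcal{L}_{4t,m}\cap\mathcal{N}_{4t}$; their union is a disjoint union of even-length cycles whose edges alternate between $\pi_m$ and $g_m$ (a cycle of length $2$ occurs exactly when the two fibers through a point coincide). In each such cycle, selecting every other vertex --- starting, say, from the lexicographically smallest vertex in the cycle --- produces a section $\sigma$ of $\pi_m$ whose image contains no $g_m$-paired elements. Setting $\Phi_m:=g_m\circ\sigma$ then yields a well-defined bijection from $L_{4t,m}\cap R_{4t}$ onto $C_{t,m}$. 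The main obstacle is that the $\pi_m$- and $g_m$-fibers generally differ on $\mathcal{L}_{4t,m}\cap\mathcal{N}_{4t}$, so the naive equation $\Phi_m\circ\pi_m=g_m$ does not hold identically; the cycle-and-section analysis just described is what is needed to turn the two $2$-to-$1$ maps of the diagram into the single bijection $\Phi_m$ claimed by the theorem.
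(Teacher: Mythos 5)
Your proposal is correct and follows essentially the same route as the paper: both $\pi_m$ and $g_m$ are surjective $2$-to-$1$ maps out of $\mathcal{L}_{4t,m}\cap\mathcal{N}_{4t}$, so $2|L_{4t,m}\cap R_{4t}|=|\mathcal{L}_{4t,m}\cap\mathcal{N}_{4t}|=2|C_{t,m}|$ and the bijection follows. Your two additions --- the palindromic run-length check that the restricted $g_m$ is well-defined and surjective onto $C_{t,m}$, and the matching-union cycle argument producing an explicit section and hence an explicit $\Phi_m$ --- are correct refinements of details the paper leaves implicit (it simply observes that the two fibers generically differ and takes $\Phi_m$ to be the bijection guaranteed by equal cardinality of finite sets).
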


We have now reduced the problem to counting $C_{t,m}$, whose computation involves using the recursion relation: $|C_{t,m}|=\sum_{i=1}^m |C_{t-i,m}|$.  The combinatorial analysis solving this problem follows from Theorem 2 of \cite{Dr}, where $C_{t,m}$ is denoted by $F_{t+1}^{(m)}$ in the article.  We have
\begin{equation}
\label{boundedcompositions}
|C_{t,m}|=
rnd\left(\frac{\alpha_m -1}{2+(m+1)(\alpha_m -2)}\alpha_m^{t}\right)
\end{equation}
where $rnd(x)=\lfloor{x+\frac{1}{2}}\rfloor$ and $\alpha_m$ is the unique positive root of $z^m-z^{m-1}-\dots-1=0$.  We remark that the coefficient $\frac{\alpha_m -1}{2+(m+1)(\alpha_m -2)}$ in equation (\ref{boundedcompositions}) only depends on $m$ and thus we denote it by $d_m$.  We note that  $2(1-2^{-m}) \leq  \alpha_m < 2$ and the $\alpha_m$ are increasing as $m$ increases. For the details see \cite{Dr}.

We have proven the following theorem.

\begin{thm}
\label{thm:lowlyingrec}
\leavevmode
\begin{enumerate}
\item
$d_m\alpha_m^{t}-\frac{1}{2}\leq|L_{4t,m}\cap R_{4t}|\leq d_m\alpha_m^{t}+\frac{1}{2}, \textrm{for all}\,\,t\geq1$\\

\item
$|L_{4t,m}\cap R_{4t}|\sim d_m\alpha_m^{t}, \textrm{as}\,\,t\to\infty$

\end{enumerate}
\end{thm}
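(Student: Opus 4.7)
The plan is to combine the two results that the paper has already established just before this theorem: the bijection from Theorem \ref{bijection}, which shows $|L_{4t,m} \cap R_{4t}| = |C_{t,m}|$, and the closed-form count for $|C_{t,m}|$ given in equation (\ref{boundedcompositions}), which comes from Theorem 2 of \cite{Dr}. Since $|C_{t,m}| = rnd(d_m \alpha_m^t) = \lfloor d_m \alpha_m^t + \tfrac{1}{2}\rfloor$ where $d_m := \frac{\alpha_m-1}{2+(m+1)(\alpha_m-2)}$, the theorem reduces to analyzing how far the integer rounding $rnd(x)$ can deviate from $x$, and then to passing to an asymptotic statement.

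For item (1), I would note that by the definition of $rnd$, for every real number $x$ one has
\begin{equation*}
x - \tfrac{1}{2} \;\leq\; \lfloor x + \tfrac{1}{2} \rfloor \;\leq\; x + \tfrac{1}{2}.
\end{equation*}
Applying this with $x = d_m \alpha_m^t$ and using $|L_{4t,m} \cap R_{4t}| = |C_{t,m}| = \lfloor d_m \alpha_m^t + \tfrac{1}{2}\rfloor$ immediately gives the two-sided bound claimed in (1). This holds for every $t \geq 1$ without any case analysis, since the bijection and the formula from \cite{Dr} apply for all such $t$.

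For item (2), I would divide the two-sided bound in (1) by $d_m \alpha_m^t$ to obtain
\begin{equation*}
1 - \frac{1}{2 d_m \alpha_m^t} \;\leq\; \frac{|L_{4t,m} \cap R_{4t}|}{d_m \alpha_m^t} \;\leq\; 1 + \frac{1}{2 d_m \alpha_m^t}.
\end{equation*}
The only thing to verify is that $d_m \alpha_m^t \to \infty$ as $t \to \infty$. Since $m \geq 2$, the stated algebraic properties of $\alpha_m$ (namely $2(1-2^{-m}) \leq \alpha_m < 2$) give $\alpha_m \geq \tfrac{3}{2} > 1$, so $\alpha_m^t \to \infty$; and $d_m$ is a fixed positive constant for each $m$. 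The squeeze then forces the ratio to $1$, proving (2).

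There really is no substantial obstacle here, since the difficult combinatorial content has been packaged into Theorem \ref{bijection} and into the cited rounding formula of Drmota. The only small thing to be careful about is confirming positivity of the constant $d_m$ (so that the inequalities are not reversed when dividing) and positivity of $\alpha_m - 1$ for $m \geq 2$; both follow from the properties of $\alpha_m$ recalled after Theorem \ref{thm:main}.
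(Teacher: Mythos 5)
Your proposal is correct and follows essentially the same route as the paper: the authors likewise obtain the theorem directly from the bijection of Theorem \ref{bijection} together with the closed-form count $|C_{t,m}|=rnd(d_m\alpha_m^{t})$ from \cite{Dr}, with item (1) being the elementary rounding inequality and item (2) following since $d_m>0$ and $\alpha_m>1$. (One trivial slip: the formula is due to Dresden and Du, the reference \cite{Dr}, not Drmota.)
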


\begin{cor}
\label{cor:lowlyingrec}
\begin{equation*}
|L_{\leq4t,m}\cap R_{\leq4t}|\sim \left(\dfrac{\alpha_m}{2+(m+1)(\alpha_m-2)}\right)\alpha_m^{t}, \textrm{as}\,\,t\to\infty
\end{equation*}
\end{cor}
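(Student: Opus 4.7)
The plan is to pass from the length-exactly-$4s$ asymptotics of Theorem \ref{thm:lowlyingrec} to the cumulative count by summing over $s = 1, \dots, t$, and then recognizing the dominant term of the resulting geometric series.

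First I would observe that since every reciprocal word has length divisible by $4$,
\begin{equation*}
|L_{\leq 4t, m} \cap R_{\leq 4t}| \;=\; \sum_{s=1}^{t} |L_{4s, m} \cap R_{4s}|.
\end{equation*}
Next I would apply the two-sided bound from Theorem \ref{thm:lowlyingrec}(1) termwise to obtain
\begin{equation*}
\sum_{s=1}^{t}\!\left(d_m \alpha_m^{s} - \tfrac{1}{2}\right) \;\leq\; |L_{\leq 4t, m} \cap R_{\leq 4t}| \;\leq\; \sum_{s=1}^{t}\!\left(d_m \alpha_m^{s} + \tfrac{1}{2}\right),
\end{equation*}
so it suffices to control the geometric sum $\sum_{s=1}^{t} d_m \alpha_m^{s}$ and verify that the additive error $\pm t/2$ is negligible.

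Since $\alpha_m > 1$ for all $m \geq 2$ (indeed $\alpha_m \geq 2(1 - 2^{-m})$), the closed form
\begin{equation*}
\sum_{s=1}^{t} d_m \alpha_m^{s} \;=\; \frac{d_m \alpha_m (\alpha_m^{t} - 1)}{\alpha_m - 1}
\end{equation*}
gives the asymptotic $\sum_{s=1}^{t} d_m \alpha_m^{s} \sim \tfrac{d_m \alpha_m}{\alpha_m - 1}\,\alpha_m^{t}$ as $t \to \infty$. The linear error $t/2$ is dominated by $\alpha_m^{t}$, so both sides of the sandwich are asymptotic to the same constant multiple of $\alpha_m^{t}$, and therefore so is $|L_{\leq 4t, m} \cap R_{\leq 4t}|$.

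Finally, substituting $d_m = \tfrac{\alpha_m - 1}{2 + (m+1)(\alpha_m - 2)}$ into the leading coefficient simplifies $(\alpha_m - 1)$ against its counterpart in the denominator $\alpha_m - 1$, yielding
\begin{equation*}
\frac{d_m \alpha_m}{\alpha_m - 1} \;=\; \frac{\alpha_m}{2 + (m+1)(\alpha_m - 2)},
\end{equation*}
which is exactly the stated constant. There is no real obstacle here; the only step requiring mild care is confirming that the rounding/additive error $O(t)$ is swamped by the geometric growth $\alpha_m^{t}$, which follows immediately from $\alpha_m > 1$.
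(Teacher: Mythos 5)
Your proposal is correct and follows essentially the same route as the paper: decompose the cumulative count as $\sum_{s=1}^{t}|L_{4s,m}\cap R_{4s}|$, apply the two-sided bound of Theorem \ref{thm:lowlyingrec}(1) termwise, sum the geometric series, and observe that the $\pm t/2$ error is swallowed by $\alpha_m^{t}$ since $\alpha_m>1$. The only cosmetic difference is that the paper substitutes $d_m$ into the closed form before dividing through, whereas you carry $d_m$ symbolically and simplify the constant at the end.
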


\begin{proof}

First, note that $$|L_{\leq4t,m}\cap R_{\leq4t}|=\displaystyle\sum_{n=1}^t|L_{4n,m}\cap R_{4n}|.$$  Applying item (1) of Theorem \ref{thm:lowlyingrec} gives us,

\begin{equation*}
\displaystyle\sum_{n=1}^t\left(d_m\alpha_m^n-\frac{1}{2}\right)\leq |L_{\leq4t,m}\cap R_{\leq4t}|\leq \displaystyle\sum_{n=1}^t\left(d_m\alpha_m^n+\frac{1}{2}\right).
\end{equation*}

Simplifying, we have

\begin{equation*}
\left(\dfrac{\alpha_m}{2+(m+1)(\alpha_m-2)}\right)(\alpha_m^{t}-1)-\frac{t}{2} \leq |L_{\leq4t,m}\cap R_{\leq4t}| \leq \left(\dfrac{\alpha_m}{2+(m+1)(\alpha_m-2)}\right)(\alpha_m^{t}-1)+\frac{t}{2}.
\end{equation*}

The result follows by dividing by $\left(\dfrac{\alpha_m}{2+(m+1)(\alpha_m-2)}\right)\alpha_m^{t}$ and letting $t\to\infty$.

\end{proof}


\section{Representation and  Geodesic excursion into the cusp}
\label{sec:representations and geodesic excursion}

 Consider the group  $G=\mathbb{Z}_2\ast \mathbb{Z}_3$ with 
 generators   $a$  and   $b$ of the first and second factors
 resp.  This  group is isomorphic to the modular group. 
 We consider the following representation of $G$:
 $a \mapsto A$ and $b \mapsto B$ where,

$$
A=\begin{pmatrix} 
     0 & -1   \\
     1 & 0 
\end{pmatrix} \text{ and } B=\begin{pmatrix} 
     1 & -1    \\
     1 & 0 
\end{pmatrix}.
$$

This is a discrete, faithful representation with image $PSL(2,\mathbb{Z}).$ Let $S=\mathbb{H}/PSL(2,\mathbb{Z})$ be the associated orbifold surface.  It follows that $S$ is a generalized pair of pants.  In particular, $S$ has zero genus and signature $(2,3,\infty)$.

It is well known that  if an  orbifold surface has a cusp then it has an embedded cusp of area one  and boundary a horocycle segment of length one.  A closed geodesic that wanders into (and hence out of a cusp) has a maximal depth in which it enters. More precisely,
let $\gamma$ be  a closed geodesic on  the modular orbifold and 
$\mathcal{C}$ the  cusp of area one. The closed geodesic may wander in and out of the cusp a number of times, and each time it enters and exits the cusp we call this  an {\it excursion} of $\gamma$. The {\it depth} of an excursion is the furthest distance into the cusp the excursion goes. 

\begin{lem} \label{lem:wandering deep in cusp}
 Let  $\gamma$ be a closed geodesic on $S$. 
\begin{enumerate}
\item   An excursion
of $\gamma$ winds $k \geq 2$ times around the cusp if and only if 
the depth of the excursion is strictly  between $\log \frac{k}{2}$ and  $\log \frac{k+1}{2}$
\item $\gamma$ is contained in the $m$-thick part of $S$ if and only if some, and hence any,  representative $g \in PSL(2, \mathbb{Z})$  of $\gamma$  is an  $m$-low lying word. 

\end{enumerate}

\end{lem}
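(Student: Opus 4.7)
The strategy for item (1) is to lift a single excursion to $\mathbb{H}$, read off its depth from the Euclidean geometry of the lifted semicircle, and then match the winding number with a cyclic $(ab)$-run via the classical dictionary between the modular group and continued fractions. Lift the excursion so that it enters the standard horoball $\{y>1\}$ at the cusp $\infty$ (stabilized by $T(z)=z+1$). The lift is a Euclidean semicircle with endpoints $a<b$ on $\mathbb{R}$; its maximum height is the radius $r=(b-a)/2$, so the excursion has depth exactly $\log r$. It therefore suffices to show that the excursion winds $k$ times if and only if $b-a\in(k,k+1)$, which gives the claimed interval for the depth.

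To connect with the word, observe that $AB^{-1}=T^{-1}$ in the given matrix representation, so a cyclic $(ab)$-run of length $k$ beginning with $b^{-1}$'s contributes the parabolic factor $T^{-k}$; a run of $(ab)$'s is $PSL(2,\mathbb{Z})$-conjugate to $T^{-k}$ via a M\"obius element sending the Farey vertex $0$ to $\infty$. Thus a run of length $k$ corresponds to a parabolic sub-word that winds the geodesic exactly $k$ times around the cusp. Cyclically conjugating $g$ so that the run in question is a prefix, write $g=T^{-k}h$; the fixed points $\alpha$ of $g$ then satisfy $h(\alpha)=\alpha+k$. Using the structural constraint that $h$ must begin and end with $(ab)$-letters (so that the run does not extend cyclically), an analysis of the M\"obius action forces one fixed point of $g$ into $(-1,0)$ and the other into $(-k-1,-k)$, yielding $b-a\in(k,k+1)$.

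The main obstacle is proving the bound $b-a\in(k,k+1)$ with these strict endpoints. The cleanest route is probably to bypass direct matrix calculations and invoke the classical fact that the attracting fixed point of a hyperbolic $g\in PSL(2,\mathbb{Z})$ has a purely periodic continued fraction expansion whose partial quotients are exactly the cyclic run lengths of the $(ab)$-word; the standard convergent estimate then shows that a partial quotient of size $k$ produces an excursion with horizontal extent strictly in $(k,k+1)$, and hence depth strictly in $\bigl(\log\tfrac{k}{2},\log\tfrac{k+1}{2}\bigr)$.

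Item (2) follows at once from (1). The $m$-thick part $S_m$ deletes points of depth strictly greater than $\log\tfrac{m+1}{2}$. If $g$ is $m$-low lying then every run has length at most $m$, so by (1) every excursion has depth strictly less than $\log\tfrac{m+1}{2}$, giving $\gamma\subset S_m$. Conversely, a run of length $k\geq m+1$ produces an excursion of depth strictly greater than $\log\tfrac{k}{2}\geq\log\tfrac{m+1}{2}$, so $\gamma$ leaves $S_m$. Hence $\gamma\subset S_m$ if and only if $g$ is $m$-low lying, which is independent of the chosen representative since being $m$-low lying is preserved by conjugation in $G$.
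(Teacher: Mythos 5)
Your treatment of item (1) never actually proves item (1). In the paper that item is a purely geometric statement about a single excursion, and the proof is direct: normalize the lift to the semicircle with endpoints $\pm r$, so the depth is $\log r$; by definition the excursion winds $k$ times exactly when $f^{k}(\tilde\gamma)\cap\tilde\gamma\neq\emptyset$ and $f^{k+1}(\tilde\gamma)\cap\tilde\gamma=\emptyset$ for $f(z)=z+1$, and since $f^{j}(\tilde\gamma)$ is the semicircle with endpoints $j\pm r$, these conditions read $k<2r$ and $2r\leq k+1$, with equality excluded because two hyperbolic elements of a Fuchsian group cannot share exactly one fixed point; hence $\frac{k}{2}<r<\frac{k+1}{2}$. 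You assert that ``it suffices to show that the excursion winds $k$ times iff $b-a\in(k,k+1)$'' and then try to prove that equivalence from the run structure of the word --- but that equivalence \emph{is} item (1) (an elementary intersection computation with translated semicircles, independent of any word), whereas the run structure belongs to item (2). So the geometric half of the lemma is left unproved and the whole burden is shifted onto a word-theoretic claim.

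That deferred claim is where the genuine gap lies: the standard continued-fraction/convergent estimate does not give horizontal extent strictly in $(k,k+1)$ for a run of length $k$. In the usual normalization one endpoint of the axis lies in $(k,k+1)$ and the other in $(-1,0)$, so the extent lies in $(k,k+2)$; note also that your own claimed fixed-point locations, one in $(-1,0)$ and one in $(-k-1,-k)$, only confine the extent to $(k-1,k+1)$. A concrete check: $g=(ab^{-1})^{2}(ab)$ maps to $\left(\begin{smallmatrix}3&-2\\-1&1\end{smallmatrix}\right)$, whose axis has endpoints $-1\pm\sqrt{3}$, hence extent $2\sqrt{3}\in(3,4)$ and depth $\log\sqrt{3}>\log\frac{3}{2}$, even though the relevant cyclic run has length $2$. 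So the bridge you build from run length to winding number fails with the strict bounds you need, and your deduction of item (2) inherits the failure. (The paper's own proof of item (2) also rests on the bald assertion that a run of length $k$ winds exactly $k$ times, so you are in good company at that step; but the paper establishes item (1) independently and correctly, which your write-up does not do.)
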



\begin{proof} Let $\widetilde{\gamma}$ be a  lift  of $\gamma$ normalized so that its endpoints at infinity are $-r$ and $r$ and the parabolic associated to the cusp normalized to be $f(z)=z+1$. The depth for this excursion  into the cusp  is $\log r$. Now if the excursion winds around the cusp $k$-times 
then $f^{k}(\tilde{\gamma}) \bigcap \tilde{\gamma} 
\neq \emptyset$ and $f^{k+1}(\tilde{\gamma}) \bigcap \tilde{\gamma}  = \emptyset$. That is, 
$\frac{k}{2}<r<\frac{k+1}{2}$. See Figure \ref{fig: hyperbolicplane}. Note that equality is not included  as that would violate the fact that two hyperbolic elements in a Fuchsian group can not share a unique fixed point.  Equivalently, 
$\log \frac{k}{2} < \log r < \log \frac{k+1}{2}$. These steps are reversible. Hence we have proven item (1).

To prove item (2),   
 the word   $g$   written as  a product of the generators in normal form   is the product of the inverse conjugate parabolic elements
 $AB$ and $AB^{-1}$. Suppose $g$ is $m$-low lying. Then the longest run of  $AB$  or $AB^{-1}$ (considered cyclically) is at most $m$. Now, a  run in the word $g$, say $k$, corresponds to $\gamma$ winding around the cusp $k$ times. By item (1), we know that the depth of this excursion is at most $\log \frac{k+1}{2} \leq \log \frac{m+1}{2}$. Thus $\gamma$ is contained in the $m$-thick part of $S$. For the converse, if $\gamma$ is in the $m$-thick part then item (1) again guarantees that there is no run of $AB$ or $AB^{-1}$ longer than $m$. Therefore $g$ is an $m$-low lying word. 
 \end{proof}

\begin{figure}
\begin{center}
\begin{overpic}[scale=.7]{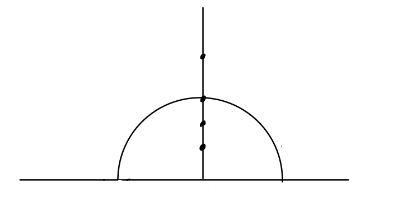}
\put(34,29){\footnotesize{$\tilde{\gamma}$}}
\put(50,3){\footnotesize{$0$}}
\put(27,3.5){\footnotesize{$-r$}}
\put(71,3){\footnotesize{$r$}}
\put(80,50){\footnotesize{$\mathbb{H}$}}
\put(53,15){\footnotesize{$i$}}
\put(53,21){\footnotesize{$ik$}}
\put(53,30){\footnotesize{$ir$}}
\put(53,38){\footnotesize{$i(k+1)$}}
\end{overpic}
\end{center}
\caption{The hyperbolic plane}
\label{fig: hyperbolicplane}
\end{figure}


\section{All Together Now: The Proof of Theorem \ref{thm:main}} \label{sec:all together}

In this section we put together the work of the previous sections to prove Theorem \ref{thm:main}.  The closed geodesics on $S$ correspond to conjugacy classes of hyperbolic elements in $PSL(2,\mathbb{Z})$. Similarly, reciprocal closed geodesics correspond to hyperbolic elements whose axes pass through an order two fixed point.  Finally, low lying closed geodesics correspond to conjugacy classes of low lying words, as in Lemma \ref{lem:wandering deep in cusp}.  Using the notation from the previous sections, we have the following correspondence between geodesics and conjugacy classes of words in the group.

\begin{eqnarray*}
\{\gamma \text{ a primitive reciprocal geodesic with }
 |\gamma| \leq 2t\}&\longleftrightarrow& R_{\leq 2t}^p\\
\{\gamma \text{ a primitive   reciprocal geodesic in $S_{m\geq2}$\,with }
 |\gamma| \leq 2t\}&\longleftrightarrow& L_{\leq 2t,m}^p\cap R_{\leq 2t}^p\\
\{\gamma \text{ a primitive closed geodesic with }
 |\gamma| \leq 2t\}&\longleftrightarrow& \{[w]\in W^p_{\leq 2t}: ||[w]||>2\}\\
\{\gamma \text{ a primitive closed geodesic in $S_{m\geq3}$ with }
 |\gamma| \leq 2t\}&\longleftrightarrow& \{[w]\in L^p_{\leq 2t,m}: ||[w]||>2\}
\end{eqnarray*}

\begin{proof}[Proof of Theorem \ref{thm:main}]
To prove item (1), we first note that 

\[|R_{\leq 2t}|=\displaystyle\sum_{n=1}^{\lfloor \frac{t}{2}\rfloor}|R_{4n}|=2^{\lfloor \frac{t}{2}\rfloor}-1\] where the last equality follows  as in the proof of the second part of Lemma \ref{lem:reciprocal conj sizes}. Using the fact that $|R_{2t}|\leq\frac{1}{2}2^{\frac{t}{2}}$, we can establish $|R^p_{\leq 2t}|\sim|R_{\leq 2t}|$ in a similar way to what was done for length $4t$.

To prove  item (2), note that \[|L_{\leq 2t,m}\cap R_{\leq 2t}|=\displaystyle\sum_{n=1}^{\lfloor \frac{t}{2}\rfloor}|L_{4n,m}\cap R_{4n}|.\] As in the proof of Corollary \ref{cor:lowlyingrec}, applying item (1) from Theorem \ref{thm:lowlyingrec} yields 

$$|L_{\leq 2t,m}\cap R_{\leq 2t}|\sim \left(\dfrac{\alpha_m}{2+(m+1)(\alpha_m-2)}\right)\alpha_m^{\lfloor\frac{t}{2}\rfloor}.$$ Using the fact that $|L_{2t,m}\cap R_{2t}|\leq rnd(d_m\alpha_m^{\frac{t}{2}})$ it is not difficult to show \newline $|L_{\leq 2t,m}^p\cap R_{\leq 2t}^p|\sim|L_{\leq 2t,m}\cap R_{\leq 2t}|$.

Item (3) follows from Theorem \ref{thm:Wasymptotics primitive} and the fact that the primitive hyperbolic conjugacy classes have the same growth as all primitive conjugacy classes.  

Lastly, item (4) follows from Theorem  \ref{thm:lowlyingbd}, item (4), and an application of  the Stolz-Cesaro Theorem to show 

$$\displaystyle\sum_{s=t_0}^{t} \frac{2^{s-\frac{s}{m}}}{s}\sim\frac{2}{2-2^{\frac{1}{m}}}\left(\frac{2^{t-\frac{t}{m}}}{t}\right).$$

\end{proof}


\begin{thebibliography}{5}


\bibitem{Boca} F. Boca, V. Pa\c sol, A. Popa, \& A. Zaharescu, {\it Pair correlation of angles between reciprocal geodesics on the modular surface.} Algebra Number Theory 8 (2014), no. 4, 999-1035.

\bibitem{B-K2} J. Bourgain \& A. Kontorovich, {\it Beyond
expansion II: low-lying fundamental geodesics.} J. Eur. Math. Soc. (JEMS) 19 (2017), no. 5, 1331-1359.

\bibitem{B-K3} J. Bourgain \& A. Kontorovich, {\it Beyond
expansion III: Reciprocal geodesics.} Duke Math J. 168 (2019), no.18, 3413-3435.

\bibitem{Bus} P. Buser, {\it Geometry and spectra of compact Riemann surfaces.} Progress in Mathematics, 106. Birkh\"{a}user Boston,  Inc., Boston, MA 1992. xiv+454 pp.

\bibitem{CalLou} D. Calegari \& J. Louwsma, {\it Immersed surfaces in the modular orbifold.} Proc. Amer. Math. Soc. 139 (2011), no. 7, 2295-2308.


\bibitem{Dr} G. Dresden \& Z. Du, {\it A Simplified Binet Formula for $k$-Generalized Fibonacci Numbers.} J. Integer Seq. 17 (2014), no. 4, Article 14.4.7, 9 pp.

\bibitem{Er} V. Erlandsson, {\it A remark on the word length in surface groups.} Trans. Amer. Math. Soc. 372 (2019), no.1, 441-445.

\bibitem{ErPaSo} V. Erlandsson, H. Parlier, \& J. Souto, {\it Counting curves, and the stable length of currents.} J. Eur. Math. Soc. (JEMS) 22 (2020), no. 6, 1675-1702.

\bibitem{ErSo} V. Erlandsson \& J. Souto, {\it Couting curves in hyperbolic surfaces.} Geom. Funct. Anal. 26 (2016), no. 3, 729-777.


\bibitem{GubSap} V. Guba \& M. Sapir, {\it On the conjugacy growth functions of groups.} Illinois J. Math. 54 (2010), no. 1, 301-313.


\bibitem{LyndSch} R. Lyndon \& P. Schupp, {\it Combinatorial group theory.} Reprint of the 1977 edition. Classics in Mathematics. Springer-Verlag, Berlin, 2001. xiv+339 pp. ISBN: 3-540-41158-5

 \bibitem{Mag} W. Magnus, A. Karrass, \& D. Solitar, {\it Combinatorial group theory.} Presentations of groups in terms of generators and relations. Second revised edition. Dover Publications, Inc., New York, 1976. xii+444 pp.

\bibitem{Mirz} M. Mirzakhani, {\it Growth of the number of simple closed geodesics on hyperbolic surfaces.} Ann. of Math. (2) 168 (2008), no. 1, 97-125.

\bibitem{Mur} M. Mure\c{s}an, {\it A concrete approach to classical analysis.} CMS Books in Mathematics/Ouvrages de Mathématiques de la SMC. Springer, New York, 2009. xviii+433

\bibitem{Park} P.S. Park, {\it Conjugacy growth of commutators.} J. Algebra 526 (2019), 423-458.

 \bibitem{Ri} I. Rivin, {\it Growth in free groups (and other stories) -- twelve years later.} Illinois J. Math. 54 (2010), no. 1, 327-370.


  \bibitem{Sar} P. Sarnak, {\it Reciprocal Geodesics}. Analytic number theory, 217-237, Clay Math. Proc., 7, Amer. Math. Soc., Providence, RI, 2007.

\bibitem{Tra} C. Traina, {\it The conjugacy problem of the modular group and the class number of real quadratic number fields.} J. Number Theory 21 (1985), no. 2, 176-184.

 
  
 \end{thebibliography}
\end{document}